\documentclass{article}
\usepackage{graphicx} 
\usepackage[utf8]{inputenc}
\usepackage{amssymb,amsthm,amsmath,amstext,amscd,bbm}
\usepackage{graphicx} 
\usepackage[utf8]{inputenc}
\usepackage{amssymb,amsthm,amsmath,amstext,amscd}
\usepackage{xcolor}
\usepackage[normalem]{ulem}
\usepackage{tikz-cd}
\usepackage{indentfirst}
\usepackage{comment}
\usepackage{hyperref}
\allowdisplaybreaks
\numberwithin{equation}{section}\theoremstyle{plain}
\newtheorem{theorem}{Theorem}

\newtheorem{corollary}[theorem]{Corollary}
\newtheorem{proposition}[theorem]{Proposition}
\newtheorem{lemma}[theorem]{Lemma}

\newtheorem{remark}[theorem]{Remark}

\newcommand{\der}{{\rm Der}}

\newcommand{\aut}{{\rm Aut}}
\newcommand{\ad}{{\rm ad}}
\DeclareMathOperator{\centro}{Z}

\newcommand{\Z}{\mathbb{Z}}

\newcommand{\dX}{\mathcal{X}}
\newcommand{\dY}{\mathcal{Y}}

\title{Isotropy Groups of $\sigma$-Derivations on the Quantum Plane}
\author{R. Baltazar and R. Cavalheiro }

\date{}

\begin{document}

\maketitle

\begin{abstract}
    
Let $\Bbbk$ be an algebraically closed field of characteristic zero and let $\Bbbk_q[x,y]$ be the quantum plane. We study $\sigma$-derivations of $\Bbbk_q[x,y]$ and their isotropy groups under the conjugation action of automorphisms. For $q\neq\pm1$, we use Jordan's recent classification of skew derivations for
toric automorphisms, which generalizes the description of Almulhem and Brzeziński for the quantum plane.
Using this classification, we determine the isotropy groups of arbitrary $\sigma$-derivations. These groups are described by character equations on the torus ${\Bbbk^\ast}^2$, reducing the problem to arithmetic conditions. We recover the ordinary derivation case when $\sigma=\operatorname{id}$ and exhibit new phenomena for nontrivial $\sigma$-derivations, including cases where $q$ is a root of unity. We also analyze the singular case $q=-1$. In this setting, we classify the $\sigma$-derivations and describe the corresponding isotropy groups. In particular, for $\sigma=\operatorname{id}$, we obtain an explicit description of the isotropy groups of ordinary derivations of $\Bbbk_{-1}[x,y]$, completing the singular case left open in previous work \cite{SBVA}.

\end{abstract}

\section{Introduction}
Let $\Bbbk$ be an algebraically closed field of characteristic zero, let $A$ be a $\Bbbk$-algebra and let $\sigma\in\aut(A)$ be a fixed automorphism. A $\Bbbk$-linear map $\delta:A\to A$ is called a \emph{$\sigma$-derivation} if it satisfies the twisted Leibniz rule
\[
\delta(ab)=\delta(a)\sigma(b)+a\delta(b),
\quad a,b\in A.
\]
We denote by $\der_\sigma(A)$ the set of all $\sigma$-derivations of $A$. As in the ordinary case, there is a natural class of inner examples: for each $w\in A$, the map
\[
[w,\_]_\sigma:A\to A,
\qquad
[w,b]_\sigma=w\sigma(b)-bw,
\]
is a $\sigma$-derivation, called the \emph{inner $\sigma$-derivation} induced by $w$.

The isotropy of derivations, and more generally of $\sigma$-derivations, offers a natural perspective on the symmetries of algebraic operators. Although isotropy subgroups arise in several contexts, their structure is still far from being completely understood. They appear, for instance, in the study of symmetries of foliations, in questions related to Hochschild cohomology, and in
the investigation of invariants under group actions in Algebra and Geometry \cite{PanBalta, PanMendes, YanShamsuddin}.

In this paper, for a fixed automorphism $\sigma\in\aut(A)$ and a
$\sigma$-derivation $\delta\in\der_\sigma(A)$, we consider the isotropy group
of $\delta$ in the centralizer of $\sigma$:
\[
\aut_\delta(A)
=
\{\rho\in\aut(A)\mid \rho\sigma=\sigma\rho
\text{ and } \rho\delta=\delta\rho\}.
\]

When $\sigma=\operatorname{id}$, this recovers the usual isotropy group of an ordinary derivation. This notion is well-defined in general, since the conditions require compatibility of $\rho$ both with the $\sigma$-derivation structure  and with the twisting automorphism $\sigma$. Indeed, requiring that $\rho$ commutes with $\sigma$ guarantees that the action of $\rho$ preserves the structure of the $\sigma$-derivation.

In the particular case of the quantum plane $A=\Bbbk_q[x,y]$, Alev and Chamarie (\cite[Proposition 1.4.4]{AlevChamarie}) proved that, for $q\neq\pm1$, the automorphism group is purely toric: $\rho(x)=\mu_1x$, $\rho(y)=\mu_2y$, with $\mu_1,\mu_2\in\Bbbk^\ast$. Consequently, when studying isotropy groups of $\sigma$-derivations on the quantum plane with $q^2\neq1$, it is enough to consider toric automorphisms $\rho$ satisfying $\rho\delta=\delta\rho$.

The case $q=-1$ exhibits a different behavior. In this case, by (\cite[Proposition 1.4.4]{AlevChamarie}), we have: 
\[
\aut(\Bbbk_{-1}[x,y])
\cong
(\Bbbk^\ast\times\Bbbk^\ast)\rtimes\langle\tau\rangle,
\]
where $\tau(x)=y$, $\tau(y)=x$. Thus, in the case $q=-1$, the isotropy problem must be considered in the
centralizer:
\[
C_{\aut(\Bbbk_{-1}[x,y])}(\sigma)
=
\{\rho\in\aut(\Bbbk_{-1}[x,y])\mid \rho\sigma=\sigma\rho\}.
\]

In \cite{SBVA}, A. Santana, R. Baltazar, R. Vinciguerra and W. Araujo analyzed the isotropy of ordinary derivations on the quantum plane under the assumption $q^2\neq1$, a setting that includes the case where $q$ is a root of unity. They showed that the isotropy group associated with a derivation can be trivial, cyclic, finite of prescribed cardinality, or infinite. In this way, the structure of these isotropy groups is completely described in terms of elementary arithmetic invariants together with the algebraic curves defined by the associated character equations. 

M. Almulhem and T. Brzeziński developed in \cite{Brz2018} a general theory of $\sigma$-derivations on generalized Weyl algebras. For our purposes, one of the most relevant consequences is their Theorem 6.2, where they specialize this theory to the quantum disc and to the quantum plane. In particular, viewing the
quantum plane as the generalized Weyl algebra
\[
\Bbbk_q[x,y]\simeq \Bbbk[h](h,\varphi),
\quad \varphi(h)=qh,
\]
they obtain the following explicit classification of all $\sigma_\mu$-derivations when $q \in  \Bbbk^\ast$ is not a root of unity, where $\sigma_\mu$ is the automorphism given by $\sigma_\mu(x)=\mu^{-1}x$, $\sigma_\mu(y)=\mu y$.

\begin{theorem} \label{Brze} \cite[Theorem 6.2]{Brz2018}  Assume that a non-zero $q \in  \Bbbk$ is not a root of unity, and let $ \Bbbk_q [x, y]$ the quantum polynomial ring. Set $h = yx$ and let $\mu$ be a non-zero element of $\Bbbk$.

\begin{enumerate}
    \item For all $f(h) \in \Bbbk[h]$, the map $\partial$ on generators of $\Bbbk_q [x, y]$ given by 
    \begin{equation}
    \partial(x) = f(h)x, \  \ \partial(y) = -\mu f(q^{-1}h)y,
    \end{equation}
extends to a skew derivation $(\partial, \sigma_{\mu})$ on $\Bbbk_q [x, y]$. These are the only $\sigma_\mu$-derivations such that $\partial(h)=0$. They are inner if and only if there is no $d \in \{0,\ldots,\deg(f)\}$ such that $\mu = q^{-d}$, and the coefficient $f_d$ in $f(h)= \sum_k f_kh^k$ is not zero.

    \item If there exists $d \in \mathbb{N}$ such that $\mu= q^{-d+1}$, then:

\begin{itemize}
    \item for all $a(x) \in \Bbbk[x]$ and $b(y) \in \Bbbk[y]$, the map given by
    \begin{equation}
   \partial(x) = h^d b(y), \ \  \partial(y) = h^d a(x),
    \end{equation}
extends to a skew derivation $(\partial, \sigma_\mu)$ on $\Bbbk_q [x, y]$. All these derivations are inner if
$d \neq 0$, and they are not inner if $d = 0$.
    \item If $d \geq 1$, then for all $\lambda \in \Bbbk^*$, the map given by 
    \begin{equation}
    \partial(x) = 0, \ \  \partial(y) = \lambda h^{d-1}y,
    \end{equation}
extends to a non-inner skew derivation on $\Bbbk_q [x, y]$.
\end{itemize}

    \item The (combinations of the) above maps, together with the inner-type derivations, exhaust all $\sigma_\mu$-skew derivations on $\Bbbk_q [x, y]$. Every $\sigma_\mu$-skew derivation on $\Bbbk_q [x, y]$ is of this type.

\end{enumerate}
\end{theorem}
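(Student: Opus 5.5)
The plan is to exploit the $\Z$-grading of $\Bbbk_q[x,y]$ given by $\deg x=1$, $\deg y=-1$ (the generalized Weyl algebra grading), with $h=yx$ in degree $0$. Since $\sigma_\mu$ is diagonal on $x$ and $y$, it preserves this grading. Hence any $\sigma_\mu$-derivation $\partial$ splits as a (locally finite) sum of homogeneous components $\partial_n$, where $\partial_n$ raises degree by $n$, and each $\partial_n$ is again a $\sigma_\mu$-derivation. The same splitting applies to inner ones, since $[w,\_]_{\sigma_\mu}$ splits along the homogeneous components of $w$. So it suffices to classify homogeneous $\sigma_\mu$-derivations and to decide, component by component, which are inner.

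A homogeneous $\partial_n$ is determined by $\partial_n(x)$ and $\partial_n(y)$. These are forced to have the forms $a(h)x^{n+1}$, $b(h)y^{\,1-n}$ (read with $x$ and $y$ exchanged appropriately when exponents are negative). The only constraint is compatibility with the defining relation $xy=q\,yx$:
\[
\partial(x)\sigma_\mu(y)+x\partial(y)=q\bigl(\partial(y)\sigma_\mu(x)+y\partial(x)\bigr).
\]
Using $xf(h)=f(qh)x$ and $yf(h)=f(q^{-1}h)y$, this becomes a $q$-difference equation linking $a$ and $b$ in $\Bbbk[h]$. I will solve it coefficientwise in $h^k$. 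For a homogeneous $w$ (e.g.\ $w=c(h)x^{n}$), I will compute $[w,\_]_{\sigma_\mu}$ explicitly in the same normal form. Then the question of whether a solution is inner reduces to dividing the coefficients of $a$ or $b$ by scalars of the form $\mu q^{k}-1$ (or $q^{k}-\mu$).

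Degree $0$ gives $\partial(x)=f(h)x$ and $\partial(y)=g(h)y$. I will compute $\partial(h)=\partial(y)\sigma_\mu(x)+y\partial(x)$. The condition $\partial(h)=0$ then forces $g(h)=-\mu f(q^{-1}h)$, which yields the family in part 1. The general degree-$0$ solution is this family plus an inner one $[c(h),\_]_{\sigma_\mu}$. For the family itself, solving $[c(h),\_]_{\sigma_\mu}=\partial$ coefficientwise requires dividing $f_k$ by $(\mu^{-1}q^{-k}-1)$ up to a unit. This is possible exactly when no $k$ with $f_k\neq0$ satisfies $\mu=q^{-k}$, which is the innerness criterion of part 1. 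Here the hypothesis that $q$ is not a root of unity guarantees that at most one $k$ is resonant.

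In nonzero degrees the same computation shows that every solution is inner, except at the resonant value $\mu=q^{-d+1}$. There the difference equation admits extra solutions in which $\partial(x)$ and $\partial(y)$ are multiples of $h^d$ times pure powers of $y$ (respectively $x$), and these are the maps of part 2. For these, I will check directly whether a preimage $w$ exists: for $d\ge1$ the factor $h^d$ allows one to write down $w$, while for $d=0$ a degree argument on $h$ shows none exists. The second family of part 2 is the degree-$0$ resonant residue $h^{d-1}y$ that cannot be absorbed; I will show it is non-inner by comparing the $h^{d-1}$-coefficient, where the inner map vanishes. Part 3 then follows by assembling the components.

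I expect the main obstacle to be the bookkeeping in the resonant cases. This means tracking exactly which coefficient equations degenerate when $\mu q^{k}=1$, and proving non-innerness there rather than merely failing to construct $w$. I would handle this by isolating the single resonant monomial and showing that every inner $\sigma_\mu$-derivation has zero coefficient on it.
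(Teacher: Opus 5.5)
The paper gives no proof of this statement. It is quoted from \cite[Theorem 6.2]{Brz2018}, where it follows from Almulhem and Brzezi\'nski's general theory of skew derivations on generalized Weyl algebras applied to $\Bbbk_q[x,y]\simeq\Bbbk[h](h,\varphi)$; the paper itself then works with Jordan's classification (Theorem \ref{classification}) instead.

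Your route is a direct, self-contained version of the same mechanism:
the grading $\deg x=1$, $\deg y=-1$ is the GWA grading, and $\sigma_\mu$ preserves it;
the finitely many homogeneous components $\partial_n$ are again $\sigma_\mu$-derivations;
$\partial$ is inner iff every $\partial_n$ is;
everything reduces to $q$-difference equations in $\Bbbk[h]$.
The gain of the hands-on version is that you see exactly where ``$q$ not a root of unity'' enters. The outline is sound, but three points need repair.

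First, in degree $0$ the sentence ``the general degree-$0$ solution is this family plus an inner one'' is false. Since $\sigma_\mu(h)=h$, every $[c(h),\_]_{\sigma_\mu}$ kills $h$ and already lies in the family. Compatibility with $xy=qyx$ gives, coefficientwise, $(\mu-q^{-k})(f_k+\mu^{-1}q^kg_k)=0$. So the complement of the family is the line $\partial(x)=0$, $\partial(y)=\lambda h^{k_0}y$ when $\mu=q^{-k_0}$. Your later ``resonant residue'' is the correct statement and should replace the earlier one.

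Second, the claim that the part-1 maps are the \emph{only} $\sigma_\mu$-derivations with $\partial(h)=0$ is not settled by degree $0$ alone. You must also show that $\partial_n(h)=0$ forces $\partial_n=0$ for $n\neq0$. For $n\ge1$, write $\partial_n(x)=a(h)x^{n+1}$ and $\partial_n(y)=b(h)x^{n-1}$. The compatibility relation is
\[
\mu q^{n+1}h\,a(h)+b(qh)=q\mu^{-1}b(h)+q\,h\,a(q^{-1}h),
\]
while $\partial_n(h)=\bigl(\mu^{-1}b(h)+h\,a(q^{-1}h)\bigr)x^n$. Substituting $b(h)=-\mu h\,a(q^{-1}h)$ gives $\mu q(q^n-1)a_k=0$ for all $k$, so $a=b=0$. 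The case $n\le-1$ is symmetric.

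Third, in nonzero degrees the resonant difficulty is the opposite of what your last paragraph anticipates. The only non-inner components there are the $d=0$ ones: for $\mu=q$, the constant term of $\partial_n(y)$ (resp.\ $\partial_n(x)$ when $n\le-1$) is free, while inner maps have it zero. At $\mu=q^{-k_0}$, the inner maps $[c(h)x^n,\_]_{\sigma_\mu}$ have $a_{k_0}=c_{k_0}(\mu^{-1}-q^{k_0})=0$. So you must prove that compatibility forces $a_{k_0}=0$, not that inner maps miss this coefficient.

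The $h^{k_0+1}$-coefficient of the relation above does this: it reads $0\cdot b_{k_0+1}=q^{1-k_0}(1-q^n)\,a_{k_0}$. Then $c_{k_0}$ is recovered from $b_{k_0+1}=c_{k_0}q^{-k_0}(q^n-1)$. Both steps divide by $q^n-1$. If $q$ were a root of unity of order dividing $n$ they would fail, which is consistent with the non-inner summands $\lambda(x^t,y^t)D_x$, $\lambda\notin\Bbbk$, in Theorem \ref{classification}.
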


In Section \ref{scharacterization} we study the case $q^2\neq1$. In this setting, every automorphism of the quantum plane $\Bbbk_q[x,y]$ is toric. We use Jordan's recent classification of skew derivations for toric
automorphisms. More precisely, we translate
Jordan's result into our convention and organize the resulting decomposition in a form adapted to the isotropy problem, separating each $\sigma$-derivation into an inner component and explicit non-inner families. This extends, in the case of the quantum plane, the classification obtained by Almulhem and Brzeziński in Theorem \ref{Brze} for the automorphisms $\sigma_\mu$, including the case where $q\neq\pm1$ is a root of unity.

In the same Section \ref{scharacterization} we study the isotropy groups of the $\sigma$-derivations. We explicitly describe the isotropy groups of each family appearing in Theorem \ref{classification}. Since these families form independent $\aut(\Bbbk_q[x,y])$-submodules, the isotropy group of an arbitrary $\sigma$-derivation is obtained as the intersection of the isotropy groups of its components. Equivalently, it is described by character equations on the torus ${\Bbbk^\ast}^2$, and hence by arithmetic conditions in the lattice $\mathbb Z^2$. This allows us to recover the ordinary derivation case of \cite{SBVA} when $\sigma=\mathrm{id}$. At the same time, for nontrivial $\sigma$-derivations we obtain new phenomena: when $q$ is not a root of unity, every finite subgroup of ${\Bbbk^\ast}^2$ occurs as an isotropy group, while in the root-of-unity case some finite groups, which do not occur for ordinary derivations, may occur for suitable choices of $\sigma$.

In Section \ref{classification-1-section} we study the singular case $q=-1$. We first characterize the $\sigma$-derivations of $\Bbbk_{-1}[x,y]$, distinguishing between toric and flip automorphisms $\sigma$. We then use this classification to describe the corresponding isotropy groups. In the toric case, the toric part of
the isotropy is still governed by character equations, as in the case
$q\neq\pm1$, but the presence of flip automorphisms imposes additional compatibility conditions on the components of the derivation. Finally, when $\sigma=\mathrm{id}$, our results give an explicit description of the isotropy groups of ordinary derivations of $\Bbbk_{-1}[x,y]$, filling the gap left open in the singular case in \cite{SBVA}.

\section{The case $q^2\neq1$}\label{scharacterization}

Throughout this section we assume that $q\neq\pm1$. By the description of the automorphism group of the quantum plane, every automorphism of $\Bbbk_q[x,y]$ is toric. Thus, for a fixed automorphism $\sigma$, we may write
\[
\sigma(x)=\alpha x,\quad \sigma(y)=\beta y,
\quad \alpha,\beta\in\Bbbk^\ast.
\]

We shall use the following notation for the center. If $q$ is a root of unity, let $t$ be its order in $\Bbbk^\ast$; if $q$ is not a root of unity, we take $t=0$. Thus, we denote
\[
\centro(\Bbbk_q[x,y])=\Bbbk[x^t,y^t],
\]
with the convention that $\Bbbk[x^0,y^0]=\Bbbk$.

We first recall Jordan's classification of $\sigma$-derivations in the case of the quantum plane. We then use this decomposition to describe the corresponding isotropy groups. This recovers the known description for ordinary
derivations when $\sigma=\operatorname{id}$ and, at the same time, reveals new phenomena for nontrivial $\sigma$-derivations, which distinguish the twisted setting from the ordinary one.

\subsection{A reformulation of Jordan's classification}

The classification of skew derivations of quantum affine spaces for toric automorphisms was obtained recently by D. Jordan \cite{Jordan2025}. In particular, his results contain the classification of $\sigma$-derivations of the quantum plane for toric automorphisms, after translating from the convention of left skew derivations to the convention used in the present paper (see, \cite[Proposition 5.3, Remark 5.5 and Examples 7.2]{Jordan2025}). We recall below this classification in a form adapted to the isotropy problem. More precisely, we separate the inner component and the non-inner families into independent $\aut(\Bbbk_q[x,y])$-submodules, allowing the isotropy group to be obtained as an intersection of the isotropy groups of these components.

\begin{theorem}\label{classification}
	Let $\sigma(x) = \alpha x$, $\sigma(y) = \beta y$ be an automorphism of the quantum plane $\Bbbk_q[x,y]$, with $q^2 \neq 1$. Then, each $\sigma$-derivation on $\Bbbk_q[x,y]$ can be decomposed into
	$$\delta = \mathcal{P}_x + \mathcal{P}_y + \lambda_1(x^t,y^t) D_x + \lambda_2(x^t,y^t) D_y + [w,\_]_{\sigma}$$
	where $\lambda_1, \lambda_2$ belong to the center of $\Bbbk_q[x,y]$ and
	\begin{itemize}
		\item $[w,\_]_{\sigma}$ is an inner $\sigma$-derivation induced by some $w \in \Bbbk_q[x,y]$;
		
		\item if $\beta = q$ then $\mathcal{P}_x$ is a (non-inner) $\sigma$-derivation induced by
		$$\left\{ \begin{array}{l}
			\mathcal{P}_x(x) = a(y) \\
			\mathcal{P}_x(y) = 0
		\end{array} \right. \; , \quad \text{for some } a(y) \in \Bbbk[y],$$
		but if $\beta \neq q$ then $\mathcal{P}_x = 0$;
		
		\item if $\alpha = q^{-1}$ then $\mathcal{P}_y$ is a (non-inner) $\sigma$-derivation induced by
		$$\left\{ \begin{array}{l}
			\mathcal{P}_y(x) = 0 \\
			\mathcal{P}_y(y) = b(x)
		\end{array} \right. \; , \quad \text{for some } b(x) \in \Bbbk[x]$$
		but if $\alpha \neq q^{-1}$ then $\mathcal{P}_y = 0$;
		
		\item if $\alpha = q^{n}$ and $\beta = q^{-m}$ for some $n,m \geq 0$ then, for the smallest nonnegative integers $m,n$ for which this occurs, $D_x$ and $D_y$ are the (non-inner) $\sigma$-derivations induced by
		$$\left\{ \begin{array}{l}
			D_x(x) = (y^n x^m) x \\
			D_x(y) = 0
		\end{array} \right. \qquad \text{and} \qquad \left\{ \begin{array}{l}
			D_y(x) = 0 \\
			D_y(y) = (y^n x^m) y
		\end{array} \right.$$
		but if $\alpha \neq q^{n}$ for all $n \geq 0$ or $\beta \neq q^{-m}$ for all $m \geq 0$ then $D_x = D_y = 0.$
	\end{itemize}
\end{theorem}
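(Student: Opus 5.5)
The plan is to argue directly from the $\Z^2$-grading of $A=\Bbbk_q[x,y]$, using the monomial basis $x^iy^j$ (or equivalently $y^jx^i$), rather than only citing \cite{Jordan2025}. Since $\sigma$ is toric, it preserves the grading. A $\sigma$-derivation is determined by the pair $(\delta(x),\delta(y))$, and such a pair extends to a $\sigma$-derivation exactly when the relation $yx=qxy$ is respected:
\[
\delta(y)\sigma(x)+y\delta(x)=q\bigl(\delta(x)\sigma(y)+x\delta(y)\bigr),
\]
that is,
\[
\alpha\,\delta(y)x+y\delta(x)=q\beta\,\delta(x)y+qx\delta(y).
\]
This is the standard universal-property argument for the free algebra modulo one relation. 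The condition is linear and homogeneous with respect to the grading shifted by $(1,0)$ on $\delta(x)$ and by $(0,1)$ on $\delta(y)$. So $\der_\sigma(A)$ decomposes into homogeneous components $\delta^{(i,j)}$ with $\delta(x)=c_1x^{i+1}y^j$ and $\delta(y)=c_2x^iy^{j+1}$, where $i\ge -1$, $j\ge -1$, and $c_1=0$ when $j=-1$ (and likewise $c_2=0$ when $i=-1$).

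First I would compute the compatibility condition on each component. Commuting monomials past $x$ and $y$ gives factors of powers of $q$, and the condition reduces to one scalar equation of the form
\[
c_2\,(\alpha q^{j}-q)+c_1\,(q^{i}-q\beta)=0,
\]
with the exact exponents to be fixed by the chosen ordering of monomials. Next I would compute inner $\sigma$-derivations of $w=x^ay^b$. One has $[w,x]_\sigma=(\alpha-q^{\pm b})\,x^{a+1}y^b$ and similarly $[w,y]_\sigma=(\beta-q^{\pm a})\,x^ay^{b+1}$, so the inner derivations in degree $(a,b)$ span the line spanned by $(\alpha-q^{b},\,\beta-q^{-a})$, up to the sign conventions. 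On each graded piece, the space of $\sigma$-derivations is therefore either $2$-dimensional (when both coefficients of the compatibility equation vanish) or $1$-dimensional. The inner ones form either a line or $0$ (when $\alpha=q^{b}$ and $\beta=q^{-a}$).

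The case analysis then runs as follows. If $i,j\ge0$ and the scalar equation is nontrivial, then $(\alpha-q^{j},\beta-q^{-i})\neq0$, the solution space is a single line, and it coincides with the inner line, so everything in that degree is inner. If both coefficients vanish, i.e. $\alpha=q^{j}$ and $\beta=q^{-i}$, then the space is $2$-dimensional and the inner part is $0$. The solutions are then spanned by $D_x$ and $D_y$ multiplied by monomials. Since $q$ has order $t$, the set of such $(i,j)$ is $(m,n)+t\Z^2_{\ge0}$ with $(m,n)$ minimal, and this produces the factors $\lambda_1(x^t,y^t)$ and $\lambda_2(x^t,y^t)$. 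The boundary degrees are $j=-1$, where $\delta(x)=0$ and $\delta(y)=c_2x^i$, and $i=-1$, where $\delta(y)=0$ and $\delta(x)=c_1y^j$. Here no inner element has that degree, so a solution exists exactly when $\alpha=q^{-1}$, respectively $\beta=q$, for all $i$ (respectively $j$) at once. This gives $\mathcal{P}_y$ and $\mathcal{P}_x$. Summing over degrees gives the decomposition, and the non-inner families are linearly independent modulo inner derivations because they live in degrees with no inner contribution. They are also $\aut$-stable, since toric automorphisms act diagonally on each degree.

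The main obstacle I expect is bookkeeping rather than ideas. The powers of $q$ must be tracked consistently with the stated normal form $y^nx^m$ and the convention $\alpha=q^{n}$, $\beta=q^{-m}$, so that the vanishing conditions match the statement exactly, including which index pairs with which. I would fix the ordering once, compute $x\cdot y^nx^m$ and $y\cdot y^nx^m$ explicitly, and cross-check the result against Theorem~\ref{Brze} in the case $\alpha=\mu^{-1}$, $\beta=\mu$.
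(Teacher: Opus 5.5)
Your argument is correct, but it takes a different route from the paper. The paper does no computation for this theorem. It imports Jordan's classification of skew derivations of quantum affine spaces, translating from left skew derivations and matching Jordan's components $F$ and $E$ with $\lambda_1D_x+\lambda_2D_y$ and $\mathcal P_x+\mathcal P_y$. You instead prove it from scratch with the $\mathbb{Z}^2$-grading: each graded piece of $\der_\sigma(\Bbbk_q[x,y])$ is cut out of a space of pairs $(c_1,c_2)$ of dimension at most two by one linear equation, and you compare it with the inner line in that degree. This is a $\mathbb{Z}^2$-refined version of the degree-by-degree comparison the paper uses only for the flip case in Theorem~\ref{classification-1}(2).

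What your route buys is self-containedness plus two by-products:
\begin{itemize}
\item Inner $\sigma$-derivations are graded and vanish in the exceptional and boundary degrees. So the decomposition is unique and the sum in Proposition~\ref{independent} is direct, which the paper only asserts ``follows from the proof'' of the cited result.
\item A toric $\rho$ acts on the degree-$(i,j)$ piece by the scalar $\mu_1^i\mu_2^j$. So $\rho\delta\rho^{-1}=\sum\mu_1^i\mu_2^j\,\delta^{(i,j)}$, and the character description \eqref{eqcaracter} follows at once, with $\Gamma(\delta)$ the support of $\delta$.
\end{itemize}
The paper's route buys brevity and places the result inside Jordan's general framework (arbitrary rank, quantum tori).

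One bookkeeping point needs fixing. The paper's relation is $xy=qyx$, not $yx=qxy$. With your relation every condition comes out with $q$ replaced by $q^{-1}$; for example, $\mathcal P_y$ would appear for $\alpha=q$. Yet the conclusions you state are those for $xy=qyx$, and the exponents in your displayed scalar equation are also off by one. With $xy=qyx$, $\delta(x)=c_1y^jx^{i+1}$ and $\delta(y)=c_2y^{j+1}x^i$, the compatibility condition is exactly
\[
c_1(\beta q^{i}-1)+c_2(q^{j}-\alpha)=0 .
\]
The inner $\sigma$-derivation of $y^jx^i$ corresponds to $(c_1,c_2)=(\alpha-q^j,\ \beta q^i-1)$. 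This yields precisely the conditions of the statement:
\begin{itemize}
\item the two-dimensional, non-inner degrees are those with $\alpha=q^j$ and $\beta=q^{-i}$;
\item the boundary case $j=-1$ requires $\alpha=q^{-1}$ (the family $\mathcal P_y$);
\item the boundary case $i=-1$ requires $\beta=q$ (the family $\mathcal P_x$).
\end{itemize}
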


In particular, every $\sigma$-derivation of $\Bbbk_q[x,y]$ is inner in the following cases:
\begin{itemize}
	\item $\alpha \neq q^{-1}$ and $\beta \neq q^{-m}$ for all $m \geq -1$; or
	\item $\beta \neq q$ and $\alpha \neq q^n$ for all $n \geq -1$.
\end{itemize}

\begin{proof}
The result is a reformulation, in our notation and using our convention for $\sigma$-derivations, of Jordan's classification in the case of the quantum plane. Under this reformulation, the component denoted by $F$ in \cite[Remark 5.5] {Jordan2025} corresponds to the family $\lambda_1D_x+\lambda_2D_y$, while the component denoted by $E$ corresponds to the exceptional families $\mathcal P_x+\mathcal P_y$.
\end{proof}

\subsection{Isotropy groups}

Let $R$ be a $\Bbbk$-algebra, $\sigma \in \aut(R)$ be a fixed automorphism, $\rho\in \aut(R)$ and $\delta\in \der_\sigma(R)$. It is straightforward to check that $\rho\,\delta\,\rho^{-1}\in \der_{\,\rho\sigma\rho^{-1}}(R)$. That is, conjugation preserves $\der_\sigma(R)$ precisely for those $\rho \in \aut(R)$ that \emph{commute} with $\sigma$. We denote so the \emph{centralizer}
\[
  G:=C_{\aut(R)}(\sigma)=\{\rho\in \aut(R)\mid \rho\sigma=\sigma\rho\}.
\]

Note that $G$ acts on $\der_\sigma(R)$ by conjugation. For $ \delta\in \der_\sigma(R)$, we also define the \textit{isotropy}:
\[
  \aut^{G}_{\delta}(R):=\{\rho\in G\mid \rho \delta\rho^{-1}=\delta\}.
\]

\begin{lemma}\label{lemaindep}
Let $G\subseteq \aut(R)$ be a subgroup acting on $\der_\sigma(R)$ by conjugation. Let $\mathcal D_1,\mathcal D_2\subseteq \der_\sigma(R)$ be $G$-stable subspaces such that $\mathcal D_1\cap\mathcal D_2=\{0\}$. Then, for any $d_1\in\mathcal D_1$ and $d_2\in\mathcal D_2$, we have
\[
\aut^{G}_{d_1+d_2}(R)
=
\aut^{G}_{d_1}(R)\cap \aut^{G}_{d_2}(R).
\]
\end{lemma}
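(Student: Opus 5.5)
The plan is to prove the two inclusions directly, using only the linearity of the conjugation action together with the hypotheses that each $\mathcal D_i$ is $G$-stable and that $\mathcal D_1\cap\mathcal D_2=\{0\}$. Throughout, $\rho\in G$ acts by $\rho\cdot\delta=\rho\,\delta\,\rho^{-1}$. This map is $\Bbbk$-linear in $\delta$, since $\rho(\delta+\delta')\rho^{-1}=\rho\delta\rho^{-1}+\rho\delta'\rho^{-1}$ and scalars pass through the composition.

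For the inclusion $\aut^{G}_{d_1}(R)\cap \aut^{G}_{d_2}(R)\subseteq \aut^{G}_{d_1+d_2}(R)$, take $\rho$ fixing both $d_1$ and $d_2$. Linearity gives
\[
\rho(d_1+d_2)\rho^{-1}=\rho d_1\rho^{-1}+\rho d_2\rho^{-1}=d_1+d_2 .
\]
This direction needs neither hypothesis on the subspaces.

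For the reverse inclusion, let $\rho\in G$ satisfy $\rho(d_1+d_2)\rho^{-1}=d_1+d_2$. Expanding by linearity and regrouping gives
\[
\rho d_1\rho^{-1}-d_1 \;=\; d_2-\rho d_2\rho^{-1}.
\]
Because $\mathcal D_1$ is a $G$-stable subspace, $\rho d_1\rho^{-1}\in\mathcal D_1$, so the left side lies in $\mathcal D_1$. The same argument places the right side in $\mathcal D_2$. The common value therefore lies in $\mathcal D_1\cap\mathcal D_2=\{0\}$. Hence $\rho d_1\rho^{-1}=d_1$ and $\rho d_2\rho^{-1}=d_2$, so $\rho$ lies in both isotropy groups.

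There is no real obstacle here. The only point needing care is that the sum being direct, which is exactly the condition $\mathcal D_1\cap\mathcal D_2=\{0\}$, is essential. Without it the difference could be a nonzero element common to both subspaces, and the reverse inclusion would fail. The statement also implicitly assumes that $G$ commutes with $\sigma$, since this is what makes the action land in $\der_\sigma(R)$; that condition is built into the hypothesis that $G$ acts on $\der_\sigma(R)$. By induction the same argument extends to finitely many $G$-stable subspaces whose sum is direct. This extension is the form needed to apply the lemma to the decomposition in Theorem \ref{classification}.
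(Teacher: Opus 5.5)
Your proof is correct and is essentially the paper's argument. Both rewrite the fixed-point equation so that $\rho d_1\rho^{-1}-d_1\in\mathcal D_1$ is set against $\rho d_2\rho^{-1}-d_2\in\mathcal D_2$, using $G$-stability, and then conclude from $\mathcal D_1\cap\mathcal D_2=\{0\}$. The reverse inclusion is immediate by linearity of conjugation.
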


\begin{proof}
Let $\rho\in \aut^{G}_{\,d_1+d_2}(R)$.
Then,
\[
  d_1+d_2=\rho(d_1+d_2)\rho^{-1}=(\rho d_1\rho^{-1})+(\rho d_2\rho^{-1})
  \in \mathcal D_1\oplus \mathcal D_2.
\]
Rearranging, we obtain
\[
  (\rho d_1\rho^{-1}-d_1)+(\rho d_2\rho^{-1}-d_2)=0,
\]
with the first summand in $\mathcal D_1$ and the second in $\mathcal D_2$.
Independence yields $\rho d_i\rho^{-1}=d_i$ for $i=1,2$.
Thus $\rho\in \aut^{G}_{\,d_1}(R)\cap \aut^{G}_{\,d_2}(R)$.
The reverse inclusion is immediate.
\end{proof}
\begin{remark}
    
Let $\Bbbk_q[x,y]$ be the quantum plane with relation $xy=qyx$ with $q\in\Bbbk^*$. According to Alev and Chamarie \cite[Proposition 1.4.4]{AlevChamarie}, $q \neq \pm 1$, any automorphism $ \rho \in \aut(\Bbbk_q[x,y])$ is such that $ \rho(x) = \mu_1 x $ and $ \rho(y) = \mu_2 y $, with $ \mu_1, \mu_2 \in \Bbbk^* $. Thus, $\aut(\Bbbk_q[x,y]) \cong {\Bbbk^*}^2 $. Therefore,
\[
G=C_{\aut(\Bbbk_q[x,y])}(\sigma)=\aut(\Bbbk_q[x,y]).
\]

If $\mathcal D_1,\mathcal D_2\subseteq \der_{\sigma}(\Bbbk_q[x,y])$ are $\aut(\Bbbk_q[x,y])$-submodules with
$\mathcal D_1\cap \mathcal D_2=\{0\}$. Then, using Lemma \ref{lemaindep},  for any $d_1 \in \mathcal{D}_1$, $d_2 \in \mathcal{D}_2$, it holds (suppressing $G$):
\[
Aut_{d_1+d_2}(\Bbbk_q[x,y])=Aut_{d_1}(\Bbbk_q[x,y]) \cap Aut_{d_2}(\Bbbk_q[x,y]).
\]

\end{remark}

In the next results, we study the isotropy groups of the families appearing in the classification of $\sigma$-derivations of $\Bbbk_q[x,y]$ (Theorem \ref{classification}), with $\sigma(x)=\alpha x$ and $\sigma(y)=\beta y$. As in the previous remark, let $\rho(x)=\mu_1 x$ and $\rho(y)=\mu_2 y$ be an automorphism of $\Bbbk_q[x,y]$. We shall determine, for each family of $\sigma$-derivations, the conditions on $\mu_1$ and $\mu_2$ under which $\rho\circ\delta=\delta\circ\rho$.

We then compare these descriptions with the classical case of ordinary derivations of the quantum plane, showing how some known isotropy results are
recovered when $\sigma=\mathrm{id}$. We also point out that some phenomena in the $\sigma$-derivation setting behave differently from the ordinary case, especially when the parameters of $\sigma$ interact with the powers of $q$.

\begin{proposition} \label{isotropy} The isotropy groups of the families appearing in the classification of
$\sigma$-derivations are given as follows:

\begin{enumerate}

\item[(1)] \textbf{Inner family.}
For $i,j\geq 0$, let $\delta_{y^j x^i}$ be the inner $\sigma$-derivation associated to $y^j x^i$. If $\delta_{y^j x^i}\neq 0$, then its isotropy group is
\[
\aut_{\delta_{y^j x^i}}(\Bbbk_q[x,y])
=
\{(\mu_1,\mu_2)\in\Bbbk^\ast\times\Bbbk^\ast  ; \, \mu_1^i\mu_2^j=1\}.
\]

\item[(2)] \textbf{The $\mathcal P_x$-family.}
Let $\mathcal P_x$ be defined by
$\mathcal P_x(x)=a(y)$, $\mathcal P_x(y)=0$. Then
\[
\aut_{\mathcal P_x}(\Bbbk_q[x,y])
=
\{(\mu_1,\mu_2)\in\Bbbk^\ast\times\Bbbk^\ast  ; \,
\mu_1=\mu_2^r,\  \forall \, r \text{ such that } a_r\neq 0\}.
\]

\item[(3)] \textbf{The $\mathcal P_y$-family.}
Let $\mathcal P_y$ be defined by
$\mathcal P_y(x)=0$, $\mathcal P_y(y)=b(x)$. Then
\[
\aut_{\mathcal P_y}(\Bbbk_q[x,y])
=
\{(\mu_1,\mu_2)\in\Bbbk^\ast\times\Bbbk^\ast  ; \,
\mu_2=\mu_1^s,\ \forall \, s \text{ such that }  b_s\neq 0\}.
\]

\item[(4)] \textbf{The $\lambda D_x$ and $\lambda D_y$ families.}
Let $\lambda(x^t,y^t)=\sum c_{ab}(x^t)^a(y^t)^b \in \Bbbk[x^t,y^t]$.
Then
\[
\aut_{\lambda D_x}(\Bbbk_q[x,y])=\aut_{\lambda D_y}(\Bbbk_q[x,y])=
\]
\[
=
\{(\mu_1,\mu_2)\in\Bbbk^\ast\times\Bbbk^\ast  ; \,
\mu_1^{m+at}\mu_2^{n+bt}=1,\ \forall \, (a,b) \text{ such that } c_{ab}\neq0\}.
\]

\end{enumerate}
\end{proposition}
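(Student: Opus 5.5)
Since every automorphism is toric, $\rho(x)=\mu_1x$, $\rho(y)=\mu_2y$, and every family in Theorem \ref{classification} consists of $\sigma$-derivations sending each generator to a linear combination of monomials, the plan is a direct computation of $\rho\delta\rho^{-1}$ (equivalently $\rho\delta=\delta\rho$) on the generators $x$ and $y$. A $\sigma$-derivation is determined by its values on $x$ and $y$, because the twisted Leibniz rule propagates them to all monomials. Hence $\rho\in\aut_\delta(\Bbbk_q[x,y])$ if and only if $\rho(\delta(x))=\mu_1\delta(x)$ and $\rho(\delta(y))=\mu_2\delta(y)$. We also use that $\rho(y^jx^i)=\mu_1^i\mu_2^jy^jx^i$, and that the monomials $y^jx^i$ form a basis, so that comparing coefficients monomial by monomial is legitimate.

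For (1), we compute $\delta_{y^jx^i}(x)=y^jx^i\alpha x-xy^jx^i=(\alpha-q^j)y^jx^{i+1}$ and $\delta_{y^jx^i}(y)=(\beta-q^{-i})y^{j+1}x^i$, using $xy^j=q^jy^jx$ and $yx^i=q^{-i}x^iy$. Since $\delta\neq0$, at least one of these coefficients is nonzero. In that case the condition reads $\mu_1^{i+1}\mu_2^j=\mu_1$ (or $\mu_1^i\mu_2^{j+1}=\mu_2$), and in both cases it reduces to $\mu_1^i\mu_2^j=1$. A cleaner route is to note that $\rho[w,\_]_\sigma\rho^{-1}=[\rho(w),\_]_\sigma$, because $\rho$ commutes with $\sigma$; hence $\rho$ acts on this family by the scalar $\mu_1^i\mu_2^j$. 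For (2), write $a(y)=\sum a_ry^r$. The condition $\rho(a(y))=\mu_1a(y)$ gives $a_r\mu_2^r=a_r\mu_1$ for each $r$, which is $\mu_1=\mu_2^r$ whenever $a_r\neq0$; the condition on $y$ is vacuous. Item (3) follows symmetrically.

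For (4), take $\lambda D_x$. Its value on $y$ is zero, and $\lambda D_x(x)=\sum c_{ab}x^{at}y^{bt}y^nx^mx$. Since $x^t,y^t$ are central, each summand is a scalar multiple (a power of $q$, which is nonzero) of the monomial $y^{n+bt}x^{m+at+1}$. Distinct pairs $(a,b)$ give distinct monomials, so no cancellation can occur. Applying $\rho$ multiplies the $(a,b)$-summand by $\mu_1^{m+at+1}\mu_2^{n+bt}$, and comparing with $\mu_1$ times the same summand gives $\mu_1^{m+at}\mu_2^{n+bt}=1$ for every $(a,b)$ with $c_{ab}\neq0$. The computation for $\lambda D_y$ is identical, with the extra factor $y$ in place of $x$, and yields the same equations.

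The main (mild) obstacle is the justification, in (1) and (4), that different monomials cannot interact: in (1) one must use $\delta\neq0$ to avoid the degenerate case, and in (4) one must check that reordering the central factors only produces nonzero scalars. Both are settled by the PBW basis $\{y^jx^i\}$.
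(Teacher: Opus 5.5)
Your proposal is correct and follows essentially the same route as the paper: you check $\rho\delta=\delta\rho$ on the generators $x,y$ for each family and compare coefficients in the basis $\{y^jx^i\}$, which yields exactly the stated character equations. Two minor slips do not affect the conclusion: in normal order $\delta_{y^jx^i}(y)=(\beta-q^{-i})(y^jx^i)y=(\beta q^i-1)y^{j+1}x^i$ (you dropped a factor $q^i$), and your ``distinct pairs give distinct monomials'' claim in (4) needs $t\geq1$, since for $t=0$ the coefficient $\lambda$ is just a scalar.
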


\begin{proof}  
Let $\delta_{y^j x^i}$ be the inner $\sigma$-derivation induced by $y^j x^i$, that is, $\delta_{y^j x^i}(a)=y^j x^i\,\sigma(a)-a\,y^j x^i$. Then
\[
\delta_{y^j x^i}(x)=(\alpha-q^{j})(y^j x^i)x,
\quad
\delta_{y^j x^i}(y)=(\beta-q^{-i})(y^j x^i)y.
\]

For $i,j\geq 0$, let $I_{ji}:=\Bbbk\,\delta_{y^j x^i}$
denote the subspace of $\der_\sigma(R)$ generated by the inner $\sigma$-derivation $\delta_{y^j x^i}$. Thus, $\rho(\delta_{y^j x^i}(x))=\mu_1^{i+1}\mu_2^j(\alpha-q^{j})(y^j x^i)x$ and $\delta_{y^j x^i}(\rho(x))=\mu_1(\alpha-q^{j})(y^j x^i)x$. Similarly, $\rho(\delta_{y^j x^i}(y))
=\mu_1^i\mu_2^{j+1}(\beta-q^{-i})(y^j x^i)y$ and $\delta_{y^j x^i}(\rho(y))=\mu_2(\beta-q^{-i})(y^j x^i)y$.

If $\alpha=q^{j}$ and $\beta=q^{-i}$, then $\delta_{y^j x^i}=0$. If $\alpha=q^{j}$ and $\beta\neq q^{-i}$, then $\delta_{y^j x^i}(x)=0$ but $\delta_{y^j x^i}(y)\neq0$, and for any nonzero $\delta\in I_{ji}$ we have $\rho\circ\delta=\delta\circ\rho$ if and only if $\mu_1^i\mu_2^j=1$. If $\beta=q^{-i}$ and $\alpha\neq q^{j}$, then $\delta_{y^j x^i}(y)=0$ but $\delta_{y^j x^i}(x)\neq0$, and for any nonzero $\delta\in I_{ji}$ we again obtain the same condition. If $\alpha\neq q^{j}$ and $\beta\neq q^{-i}$, then $\delta_{y^j x^i}$ is nonzero and $\rho\circ\delta=\delta\circ\rho$ if and only if $\mu_1^i\mu_2^j=1$.

Therefore, whenever $\delta_{y^j x^i}\neq0$, its isotropy group is
\[
\aut_{\delta_{y^j x^i}}(\Bbbk_q[x,y])
=
\{(\mu_1,\mu_2)\in\Bbbk^\ast\times\Bbbk^\ast \; ; \; \mu_1^i\mu_2^j=1\}.
\]

Let $\mathcal P_x$ be the $\sigma$-derivation given by
$\mathcal P_x(x)=a(y)$ and $\mathcal P_x(y)=0$ with $a(y)=\sum_{r\geq 0} a_r y^r \in \Bbbk[y]$. Then $\rho(\mathcal P_x(x))=a(\mu_2 y)
=\sum_{r\geq 0} a_r\mu_2^r y^r$, and $\mathcal P_x(\rho(x))=\mathcal P_x(\mu_1x)=\mu_1 a(y)
=\sum_{r\geq 0} a_r\mu_1 y^r$. Since $\mathcal P_x(y)=0$, it follows that $\rho\circ\mathcal P_x=\mathcal P_x\circ\rho$ if and only if $\mu_1=\mu_2^r$, for every $r$ such that $a_r\neq 0$. Thus the isotropy group of $\mathcal P_x$ is
\[
\aut_{\mathcal P_x}(\Bbbk_q[x,y])
=
\{(\mu_1,\mu_2)\in\Bbbk^\ast\times\Bbbk^\ast \; ; \;
\mu_1=\mu_2^r, \, \forall \, r \text{ such that } a_r\neq 0\}.
\]

The family $\mathcal P_y$ is treated analogously to the previous one. Then, 
\[
\aut(\Bbbk_q[x,y])_{\mathcal P_y}
=
\{(\mu_1,\mu_2)\in\Bbbk^\ast\times\Bbbk^\ast \; ; \;
\mu_2=\mu_1^s,\ \forall \, s \text{ such that } b_s\neq 0\}.
\]

Let $\lambda(x^t,y^t)=\sum_{a,b\geq 0} c_{ab}(x^t)^a(y^t)^b
\in \Bbbk[x^t,y^t]$. For $\lambda(x^t,y^t)D_x$, we have
\[
\rho(\lambda(x^t,y^t)D_x(x))
=
\lambda(\mu_1^t x^t,\mu_2^t y^t)\,
\mu_1^{m+1}\mu_2^n (y^n x^m)x,
\]
and
\[
\lambda(x^t,y^t)D_x(\rho(x))
=
\mu_1\,\lambda(x^t,y^t)(y^n x^m)x.
\]

Thus, $\rho \circ(\lambda D_x)=(\lambda D_x)\circ \rho$ if and only if $\lambda(\mu_1^t x^t,\mu_2^t y^t)\,\mu_1^m\mu_2^n = \lambda(x^t,y^t)$. Similarly, for $\lambda(x^t,y^t)D_y$, we have
\[
\rho(\lambda(x^t,y^t)D_y(y))
=
\lambda(\mu_1^t x^t,\mu_2^t y^t)\,
\mu_1^m\mu_2^{n+1}(y^n x^m)y,
\]
and
\[
\lambda(x^t,y^t)D_y(\rho(y))
=
\mu_2\,\lambda(x^t,y^t)(y^n x^m)y.
\]

Thus, $\rho\circ(\lambda D_y)=(\lambda D_y)\circ\rho$
if and only if $\lambda(\mu_1^t x^t,\mu_2^t y^t)\,\mu_1^m\mu_2^n =\lambda(x^t,y^t)$.

Writing $\lambda(x^t,y^t)=\sum_{a,b\geq 0} c_{ab}(x^t)^a(y^t)^b$, we obtain
\[
\rho\circ(\lambda D_x)=(\lambda D_x)\circ\rho
\iff
\mu_1^{m+at}\mu_2^{n+bt}=1;
\, \, \text{such that} \,c_{ab}\neq 0,
\]
and, likewise,
\[
\rho\circ(\lambda D_y)=(\lambda D_y)\circ\rho
\iff
\mu_1^{m+at}\mu_2^{n+bt}=1;
\, \, \text{such that} \,c_{ab}\neq 0.
\]

Thus the isotropy groups of $\lambda D_x$ and $\lambda D_y$ are both equal to
\[
\aut_{\lambda D_x}(\Bbbk_q[x,y])=\aut_{\lambda D_y}(\Bbbk_q[x,y])=
\]
\[
=
\{(\mu_1,\mu_2)\in\Bbbk^\ast\times\Bbbk^\ast \; ; \;
\mu_1^{m+at}\mu_2^{n+bt}=1, \,\forall \, (a,b) \text{ such that } c_{ab}\neq0\}.
\]

\end{proof}

Before the next result, we fix the following notation. We denote by
\[
\mathrm{Ad}^{\sigma}
:=
\{\delta_a^\sigma\mid a\in \Bbbk_q[x,y]\}
\subseteq \der_\sigma(\Bbbk_q[x,y])
\]
the subspace of inner $\sigma$-derivations of $\Bbbk_q[x,y]$, where $\delta_a^\sigma(b)=[a,b]_{\sigma}=a\sigma(b)-ba$, with $b\in \Bbbk_q[x,y]$.

\begin{proposition}\label{independent}
The families appearing in Theorem \ref{classification}, whenever they occur, are $\aut(\Bbbk_q[x,y])$-submodules of $\der_\sigma(\Bbbk_q[x,y])$. Moreover, these submodules are independent, that is, their sum is direct:
\[
\mathrm{Ad}^{\sigma}
\oplus
\mathcal P_x
\oplus
\mathcal P_y
\oplus
Z(\Bbbk_q[x,y])D_x
\oplus
Z(\Bbbk_q[x,y])D_y
\subseteq
\der_\sigma(\Bbbk_q[x,y]).
\]

Consequently, the decomposition in Theorem \ref{classification} is unique.
\end{proposition}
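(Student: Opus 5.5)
The plan has two parts: stability under the torus, then directness via a weight decomposition. Throughout, $\rho(x)=\mu_1x$, $\rho(y)=\mu_2y$ is an arbitrary element of $\aut(\Bbbk_q[x,y])\cong{\Bbbk^*}^2$. Since the torus is abelian, every $\rho$ commutes with $\sigma$, so $\rho\delta\rho^{-1}$ is again a $\sigma$-derivation.

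\textbf{Stability.} For the inner family, a direct check gives
\[
\rho[w,\_]_\sigma\rho^{-1}=[\rho(w),\_]_\sigma,
\]
using $\rho\sigma=\sigma\rho$. Hence $\mathrm{Ad}^\sigma$ is stable. For the other families, a $\sigma$-derivation is determined by its values on $x$ and $y$. We have
\[
(\rho\mathcal P_x\rho^{-1})(x)=\mu_1^{-1}a(\mu_2y),
\qquad
(\rho\mathcal P_x\rho^{-1})(y)=0,
\]
which is again of type $\mathcal P_x$; $\mathcal P_y$ is symmetric. Similarly, $\rho(\lambda D_x)\rho^{-1}$ sends $x\mapsto \mu_1^m\mu_2^n\lambda(\mu_1^tx^t,\mu_2^ty^t)(y^nx^m)x$ and $y\mapsto 0$, which lies in $Z\,D_x$ because $\rho$ preserves the center. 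The same holds for $D_y$. These are exactly the computations already done in Proposition \ref{isotropy}.

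\textbf{Independence.} I would use the $\Z^2$-grading by weight. A $\sigma$-derivation $\delta$ is \emph{homogeneous of weight} $(i,j)$ if it sends each monomial $y^lx^k$ into $\Bbbk\,y^{l+j}x^{k+i}$; equivalently, $\rho\delta\rho^{-1}=\mu_1^i\mu_2^j\delta$. All families are spanned by homogeneous elements:
\begin{itemize}
\item $\delta_{y^jx^i}$ has weight $(i,j)$;
\item $y^r$ in $\mathcal P_x$ has weight $(-1,r)$;
\item $x^s$ in $\mathcal P_y$ has weight $(s,-1)$;
\item $x^{at}y^{bt}D_x$ and $x^{at}y^{bt}D_y$ have weight $(m+at,n+bt)$.
\end{itemize}
Suppose a relation $\sum d_k=0$ holds with $d_k$ in the distinct families. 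Project onto each weight component; this is legitimate because each family is spanned by homogeneous elements and the weight spaces of $\mathrm{End}$ restricted to the finite data $\delta(x),\delta(y)$ are independent. It then suffices to show that, within a fixed weight, the homogeneous pieces from different families are linearly independent.

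Evaluating on $x$ and $y$ separates almost everything:
\begin{itemize}
\item $\mathcal P_x$ pieces have weight with first coordinate $-1$, which no other family attains.
\item $\mathcal P_y$ pieces have second coordinate $-1$, which is likewise unique.
\end{itemize}
So $\mathcal P_x$ and $\mathcal P_y$ split off. It remains to separate $ZD_x$, $ZD_y$ and $\mathrm{Ad}^\sigma$ within a weight $(i,j)=(m+at,n+bt)$. There the candidates act as follows:
\begin{itemize}
\item $uD_x$ sends $x\mapsto uM x$ and $y\mapsto0$;
\item $vD_y$ sends $x\mapsto0$ and $y\mapsto vMy$;
\item the inner piece sends $x\mapsto c(\alpha-q^j)Mx$ and $y\mapsto c(\beta-q^{-i})My$;
\end{itemize}
here $M=y^jx^i$. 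The key arithmetic fact is that in the weights where $D_x,D_y$ live, the inner derivation $\delta_{y^jx^i}$ vanishes. Indeed $\alpha=q^n$ and $\beta=q^{-m}$, and $q^{at}=q^{bt}=1$, so $\alpha=q^j$ and $\beta=q^{-i}$. Hence any relation forces $u=v=0$, and the inner piece is then $0$.

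\textbf{Main obstacle and consequence.} The main obstacle is this last step. It requires that in weights $(m+at,n+bt)$ the inner derivations contribute nothing, so that $D_x,D_y$ are genuinely not inner. I would verify it via the computation $\delta_{y^jx^i}(x)=(\alpha-q^j)y^jx^ix$ from the proof of Proposition \ref{isotropy}, together with $q^t=1$ (or $t=0$ when $q$ is not a root of unity). Uniqueness of the decomposition in Theorem \ref{classification} then follows immediately from directness.
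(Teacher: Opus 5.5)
Your proposal is correct. The stability half is the same as the paper's: $\rho[w,\_]_\sigma\rho^{-1}=[\rho(w),\_]_\sigma$, $(\rho\mathcal P_x\rho^{-1})(x)=\mu_1^{-1}a(\mu_2y)$, and $\rho(\lambda D_x)\rho^{-1}=\mu_1^m\mu_2^n\rho(\lambda)D_x$.

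The independence half takes a genuinely different route. The paper gives no argument there. It only says that directness ``follows from the proof of Theorem~\ref{classification}'', which defers to the internals of Jordan's classification. You instead prove it directly by grading the families by torus weights.

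Each step of that grading argument holds:
\begin{itemize}
\item The weights $(-1,r)$ and $(s,-1)$ isolate $\mathcal P_x$ and $\mathcal P_y$, because every other family has weights in $\mathbb Z_{\ge0}^2$.
\item In each weight $(m+at,n+bt)$ the only inner candidate $\delta_{y^jx^i}$ vanishes, since $\alpha=q^n=q^j$ and $\beta=q^{-m}=q^{-i}$.
\item Evaluating at $x$ and $y$ then kills the remaining $D_x$ and $D_y$ coefficients.
\end{itemize}

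Your projection step (``weight spaces of $\mathrm{End}$ restricted to the finite data'') is stated vaguely, but it is easily made precise in either of two ways. You can invoke Dedekind independence of the distinct characters $\mu_1^i\mu_2^j$ on the infinite torus. Alternatively, note that a weight-$(i,j)$ piece sends $x$ into $\Bbbk\,y^jx^{i+1}$ and $y$ into $\Bbbk\,y^{j+1}x^i$, and these monomials are distinct for distinct weights.

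Your route buys a self-contained proof that does not depend on how Jordan's proof is organized. It also re-verifies the non-innerness claims of Theorem~\ref{classification}: no inner $\sigma$-derivation has the weight of a $\mathcal P$ or $D$ piece. Finally, it establishes directness of these subspaces whether or not they exhaust $\der_\sigma(\Bbbk_q[x,y])$. The paper's route is shorter, but it is only as checkable as the cross-reference to Jordan's work.
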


\begin{proof}
We first prove the stability.  Let $\rho(x)=\mu_1 x$ and $\rho(y)=\mu_2 y$ be an automorphism of $R$. If $\delta_a^\sigma$ is the inner $\sigma$-derivation induced by $a\in R$, then $\rho\delta_a^\sigma\rho^{-1}=\delta_{\rho(a)}^\sigma$. Therefore, $\mathrm{Ad}^{\sigma}$ is an $\aut(\Bbbk_q[x,y])$-submodule.

If $\mathcal P_x$ occurs, then $\mathcal P_x(x)=a(y)$ and $\mathcal P_x(y)=0$. Thus, $(\rho\mathcal P_x\rho^{-1})(x)
=\mu_1^{-1}a(\mu_2y)$ and $(\rho\mathcal P_x\rho^{-1})(y)=0$. So, $\rho\mathcal P_x\rho^{-1}$ again belongs to the family $\mathcal P_x$. Similarly, if $\mathcal P_y$ occurs, then $\mathcal P_y$ is also an $\aut(\Bbbk_q[x,y])$-submodule.

Now suppose that $D_x$ and $D_y$ occur, so that $
\alpha=q^n$ and $\beta=q^{-m}$, also we have
\[
D_x(x)=(y^n x^m)x,\quad D_x(y)=0,
\]
\[
D_y(x)=0,\quad D_y(y)=(y^n x^m)y.
\]

Let $\lambda\in Z(R)$. Then, $(\rho(\lambda D_x)\rho^{-1})(x)=\mu_1^m\mu_2^n\lambda(\mu_1^t x^t,\mu_2^t y^t)(y^n x^m)x$ and $(\rho(\lambda D_x)\rho^{-1})(y)=0$. Hence, $\rho(\lambda D_x)\rho^{-1}\in Z(R)D_x$. The same argument gives $\rho(\lambda D_y)\rho^{-1}\in Z(R)D_y$. Therefore, all the families are $\aut(R)$-submodules.

Finally, the independence of the families follows from
the proof of the Theorem \ref{classification}.
\end{proof}


Using the classification given in Theorem \ref{classification}, every $\sigma$-derivation $\delta$ can be written as
\[
\delta
=
[w,\_]_\sigma
+
\mathcal P_x
+
\mathcal P_y
+
\lambda_1(x^t,y^t)D_x
+
\lambda_2(x^t,y^t)D_y,
\]
for some $w\in \Bbbk_q[x,y]$. Moreover, by Proposition \ref{independent}, the families appearing in this decomposition are independent $\aut(\Bbbk_q[x,y])$-submodules whenever they occur. Thus, Lemma \ref{lemaindep} gives
\[
\aut_\delta
=
\aut_{[w,\_]_\sigma}
\cap
\aut_{\mathcal P_x}
\cap
\aut_{\mathcal P_y}
\cap
\aut_{\lambda_1D_x}
\cap
\aut_{\lambda_2D_y}.
\]

Therefore, once the isotropy groups of each family are known, the isotropy of an arbitrary $\sigma$-derivation is obtained by taking their intersection. This intersection can be described in terms of characters of the torus ${\Bbbk^\ast}^2$. For $(u,v)\in\mathbb Z^2$, denote by
\[
\chi_{u,v}:{\Bbbk^\ast}^2\to \Bbbk^\ast
\]
the character $\chi_{u,v}(\mu_1,\mu_2)=\mu_1^u\mu_2^v$ (see \cite[Chapter III]{Borel}): so, each family contributes with equations of the form $\chi_{u,v}(\mu_1,\mu_2)=1$. 

More explicitly, for $[w,\_]_\sigma$ with $w=\sum c_{ij}y^j x^i$, then the inner part contributes with $(i,j) \in\mathbb Z^2$ such that $c_{ij}\neq0$ and $\delta_{y^jx^i}\neq0$. If $\mathcal P_x(x)=a(y)=\sum_{r\geq0}a_ry^r$ and $\mathcal P_x(y)=0$, then note that the family $\mathcal P_x$ contributes with $(1,-r) \in\mathbb Z^2$, where $a_r\neq0$, because the isotropy condition is $\mu_1=\mu_2^r$. Similarly, then the family $\mathcal P_y$ contributes with $(-s,1)\in\mathbb Z^2$, where $b_s\neq0$, because the isotropy condition is $\mu_2=\mu_1^s$. Finally, if
\[
\lambda_\ell(x^t,y^t)
=
\sum_{a,b\geq0}c_{ab}^{(\ell)}(x^t)^a(y^t)^b,
\quad \ell=1,2,
\]
then the terms $\lambda_1D_x$ and $\lambda_2D_y$ contribute with $(m+at,n+bt)\in\mathbb Z^2$,
where $c_{ab}^{(1)}\neq0$ or $c_{ab}^{(2)}\neq0$.

Therefore, if $\Gamma(\delta)\subseteq\mathbb Z^2$ denotes the finite set of all elements obtained from the nonzero summands of $\delta$, then
\[
\aut_\delta(\Bbbk_q[x,y])
=
\left\{
(\mu_1,\mu_2)\in {\Bbbk^\ast}^2 ; \,
\mu_1^u\mu_2^v=1,
\text{ for every }(u,v)\in\Gamma(\delta)
\right\}.
\]

Consequently, the description of $\aut_\delta(\Bbbk_q[x,y])$ reduces to an arithmetic problem in the lattice $\mathbb Z^2$. In other words,
\begin{equation}\label{eqcaracter}
\aut_\delta(\Bbbk_q[x,y])
=
\bigcap_{(u,v)\in\Gamma(\delta)}
\ker\chi_{u,v}.
\end{equation}

\begin{remark}
When $\sigma=\mathrm{id}$, the above description recovers the usual isotropy computation for ordinary derivations of the quantum plane. In that case, the non-inner summands $D_x$ and $D_y$ do not impose any new character equation, and the isotropy is determined by the inner part
$[w,\_]_{\sigma}$. Thus, the computation reduces to the equations $\mu_1^i\mu_2^j=1$ associated to the monomials appearing in $w$, exactly as in the classical ordinary derivation case (see \cite[Chapter III]{SBVA}). For general $\sigma$-derivations, however, the families $\mathcal P_x$, $\mathcal P_y$, $\lambda_1 D_x$ and $\lambda_2 D_y$ may contribute with additional characters. Thus some isotropy phenomena in the $\sigma$-derivation setting behave differently from the ordinary case.
\end{remark}

The next remark provides natural analogues, in the setting of $\sigma$-derivations, of the corresponding results obtained by A. Santana, R. Baltazar, R. Vinciguerra and W. Araujo \cite[Proposition 5]{SBVA}.

\begin{remark}
Let $\delta\in\der_\sigma(\Bbbk_q[x,y])$. We denote by $\Gamma_+(\delta)\subseteq \mathbb Z_{\geq 0}^2\setminus\{(0,0)\}$ the set of exponent vectors associated to the equations coming from the inner
part $[w,\_]_\sigma$ and from the families $\lambda_1 D_x$ and $\lambda_2 D_y$. If no term of the families $\mathcal P_x$ and $\mathcal P_y$ occurs, the exponents are non-negative integers, then the finiteness problem is exactly the same as in the ordinary derivation case: by \cite[Proposition 5]{SBVA}, the isotropy group is finite if and only if there exist $(m_i,n_i),(m_j,n_j)\in\Gamma_+(\delta)$ such that $m_i n_j-m_j n_i\neq0$. 

The new phenomenon in the $\sigma$-derivation setting is the possible appearance of the families $\mathcal P_x$ and $\mathcal P_y$. These families do not produce arbitrary negative exponent vectors: they only contribute equations of the forms $\mathcal P_x:\mu_1=\mu_2^r$, with $a_r\neq0$ and $\mathcal P_y: \mu_2=\mu_1^s$, with $b_s\neq0$. These equations may force finiteness even when the positive exponent vectors alone would give an infinite isotropy group. Indeed, suppose that an equation $\mu_1^{m_1}\mu_2^{n_1}=1$ occurs, with $(m_1,n_1)\in\Gamma_+(\delta)$. If the family $\mathcal P_y$ also contributes the equation $\mu_2=\mu_1^s$, then $\mu_1^{m_1+sn_1}=1$. If $m_1+sn_1\neq 0$, then $\mu_1$ has only finitely many possible values, and $\mu_2$ is determined by $\mu_2=\mu_1^s$. Therefore the isotropy group is finite. On the other hand, the case $m_1+s n_1=0$ does not force finiteness. Since $m_1,n_1,s\geq0$ and
$(m_1,n_1)\neq(0,0)$, then $m_1=0$, $s=0$ and $n_1>0$. In this situation, the equation coming from $\mathcal P_y$ is
$\mu_2=1$, and the equation $\mu_1^{m_1}\mu_2^{n_1}=1$
becomes $\mu_2^{n_1}=1$, which is already implied by $\mu_2=1$. So, no new independent condition is obtained from this equation, and $\mu_1$ remains free unless another equation is present. The case of the family $\mathcal P_x$ is analogous. Therefore, the finiteness problem for $\sigma$-derivations is still governed by the same arithmetic idea as in \cite[Proposition 5]{SBVA}. 
\end{remark}

\subsection{Inner $\sigma$-derivation}

We begin with a general observation for ordinary inner derivations on an arbitrary $\Bbbk$-algebra, which serves as motivation for the corresponding criterion in the $\sigma$-derivation setting. For $w\in \Bbbk_q[x,y]$, we write 
\[
\delta_w^\sigma(b)=w\sigma(b)-bw,
\quad b\in \Bbbk_q[x,y].
\]
for the inner $\sigma$-derivation induced by $w$. When $\sigma$ is fixed, we write simply $\delta_w$.

\begin{lemma}\label{lema_interna_geral}
    Let $ R $ be a $\Bbbk$-algebra, $ \delta_a $ an inner derivation of $R$, and $ \varphi \in Aut(R) $. Then $ \varphi \in Aut_{\delta_a}(R) $ if and only if $ \varphi(a) - a \in Z(R)$ .
\end{lemma}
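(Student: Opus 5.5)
The argument is a direct computation from the definitions. It uses only two facts: $\delta_a(b)=ab-ba$, and $\varphi$ is an algebra automorphism.

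First, conjugation transforms inner derivations by transporting the inducing element. For every $b\in R$,
\[
\varphi\,\delta_a\,\varphi^{-1}(b)=\varphi\big(a\varphi^{-1}(b)-\varphi^{-1}(b)a\big)=\varphi(a)b-b\varphi(a)=\delta_{\varphi(a)}(b),
\]
so $\varphi\delta_a\varphi^{-1}=\delta_{\varphi(a)}$. This is the same identity already used in the proof of Proposition \ref{independent}, there for the twisted version.

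By definition, $\varphi\in\aut_{\delta_a}(R)$ means $\varphi\delta_a=\delta_a\varphi$, which is equivalent to $\varphi\delta_a\varphi^{-1}=\delta_a$. By the first step, this becomes $\delta_{\varphi(a)}=\delta_a$. The map $c\mapsto \delta_c$ is $\Bbbk$-linear, so this holds if and only if $\delta_{\varphi(a)-a}=0$. That in turn means $(\varphi(a)-a)b=b(\varphi(a)-a)$ for all $b\in R$, that is, $\varphi(a)-a\in Z(R)$.

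Each step is an equivalence, so both directions follow simultaneously. The only step needing any care is the identification $\ker(c\mapsto\delta_c)=Z(R)$, and it is immediate from the definition of the center.
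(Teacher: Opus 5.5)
Your proof is correct and follows essentially the same route as the paper. The paper writes $\varphi([a,b])=[\varphi(a),\varphi(b)]$ and compares it with $[a,\varphi(b)]$, implicitly using that $\varphi(b)$ runs over all of $R$; this is your identity $\varphi\delta_a\varphi^{-1}=\delta_{\varphi(a)}$ followed by $\ker(c\mapsto\delta_c)=Z(R)$, with the surjectivity made explicit through $\varphi^{-1}$.
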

\begin{proof} 
Since $ \varphi $ is an automorphism, it holds $ \varphi([a,b]) = [\varphi(a), \varphi(b)]$, for all $ b \in R $.
Moreover, $ \varphi \in Aut_{\delta_a}(R) $ if and only if $ \varphi([a,b]) = [a, \varphi(b)]$.
At last, $[\varphi(a), \varphi(b)] = [a, \varphi(b)]$ if and only if $\varphi(a) -a \in Z(R)$.
\end{proof}

The next result shows that, up to the zero inner $\sigma$-derivations, the isotropy group of an inner $\sigma$-derivation $\delta_w$ is determined by the condition $\rho(w)=w$.

\begin{proposition}\label{pho(x)}
Let $\sigma(x)=\alpha x$, $\sigma(y)=\beta y$ be an automorphism of $\Bbbk_q[x,y]$. For $w\in \Bbbk_q[x,y]$, consider the inner $\sigma$-derivation $\delta_w(b)=w\sigma(b)-bw$. Then:
\begin{enumerate}
\item[(1)]
If there are no integers $i,j\geq 0$ such that $\alpha=q^{j}$ and $\beta=q^{-i}$, then
\[
\rho\in\aut_{\delta_w}(\Bbbk_q[x,y])
\iff
\rho(w)=w .
\]

\item[(2)]
If there exist integers $i,j\geq 0$ such that $\alpha=q^{j}$ and $\beta=q^{-i}$, then
\[
\rho\in\aut_{\delta_w}(\Bbbk_q[x,y])
\iff
\rho(w)-w\in \Bbbk[x^t,y^t]\cdot y^j x^i .
\]
\end{enumerate}

Furthermore, since the inner $\sigma$-derivation
$\delta_{\lambda(x^t,y^t)\,y^j x^i}$ is zero whenever
$\alpha=q^{j}$ and $\beta=q^{-i}$, for any
$\lambda(x^t,y^t)\in\Bbbk[x^t,y^t]$, we may choose $w$ without terms in
$\Bbbk[x^t,y^t]\cdot y^j x^i$, and thus the condition $\rho(w)-w\in \Bbbk[x^t,y^t]\cdot y^j x^i$ forces $\rho(w)=w$.
\end{proposition}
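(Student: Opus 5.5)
The plan is to mimic Lemma \ref{lema_interna_geral}, replacing the center by the kernel of the map $w\mapsto\delta_w$. The first step is the twisted analogue of $\varphi([a,b])=[\varphi(a),\varphi(b)]$. Since $\rho$ is toric, it commutes with $\sigma$, and so for every $b$
\[
\rho(\delta_w(b))=\rho(w)\sigma(\rho(b))-\rho(b)\rho(w)=\delta_{\rho(w)}(\rho(b)).
\]
Hence $\rho\delta_w\rho^{-1}=\delta_{\rho(w)}$, which is the identity already used in the proof of Proposition \ref{independent}. Because $w\mapsto\delta_w$ is linear, we get
\[
\rho\in\aut_{\delta_w}(\Bbbk_q[x,y]) \iff \delta_{\rho(w)-w}=0 \iff \rho(w)-w\in K,
\]
where $K:=\{u\in\Bbbk_q[x,y]\mid \delta_u=0\}$. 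The whole proof therefore reduces to computing $K$.

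To compute $K$, write $u=\sum c_{kl}\,y^lx^k$. The computation in the proof of Proposition \ref{isotropy} gives
\[
\delta_u(x)=\sum c_{kl}(\alpha-q^{l})\,y^lx^{k+1},\qquad \delta_u(y)=\sum c_{kl}(\beta-q^{-k})\,y^{l+1}x^{k}.
\]
The monomials $y^lx^k$ form a basis, so $\delta_u=0$ if and only if $\alpha=q^l$ and $\beta=q^{-k}$ for every $(k,l)$ with $c_{kl}\neq0$. It is enough to check this on $x$ and $y$, since a $\sigma$-derivation is determined by its values on generators.

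\textbf{Case (1).} If no pair $i,j\geq0$ satisfies $\alpha=q^j$ and $\beta=q^{-i}$, then $K=0$, and the criterion above becomes $\rho(w)=w$.

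\textbf{Case (2).} Fix the pair $(i,j)$. The exponents $(k,l)$ with $q^l=q^j$ and $q^{-k}=q^{-i}$ are exactly those with $l\equiv j$ and $k\equiv i \pmod t$ when $q$ has order $t$. When $q$ is not a root of unity, i.e. $t=0$, they are just $(k,l)=(i,j)$. The main bookkeeping step is to show that the span of such monomials is $\Bbbk[x^t,y^t]\cdot y^jx^i$. For this I choose $0\le i,j<t$ minimal, which the statement implicitly allows since otherwise only monomials of larger degree would be captured. The identity $y^{l}x^{k}=y^{l-j}x^{k-i}\,y^jx^i$ holds up to a power of $q$ that is $1$ because $t$ divides the exponents, and it uses that $x^t,y^t$ are central. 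With this, $K=\Bbbk[x^t,y^t]\,y^jx^i$, convention $\Bbbk[x^0,y^0]=\Bbbk$ included, and statement (2) follows.

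\textbf{Final assertion.} Since $\delta$ is linear in $w$ and vanishes on $K$, we may replace $w$ by its component in the complementary span of the remaining monomials without changing $\delta_w$. That span is $\rho$-stable because $\rho$ scales monomials. Hence $\rho(w)-w$ lies both in it and in $K$, which forces $\rho(w)-w=0$. I expect the only delicate point to be the identification of $K$ in the root-of-unity case, namely the choice of minimal $(i,j)$ and the central-factor identity.
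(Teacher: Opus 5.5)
Your proof is correct and is essentially the paper's argument: you use $\rho\delta_w\rho^{-1}=\delta_{\rho(w)}$ to reduce the isotropy condition to $\delta_{\rho(w)-w}=0$, and then determine this kernel by evaluating on $x$ and $y$ monomial by monomial. Two points the paper leaves implicit are made precise in your version, namely the choice of minimal $(i,j)$ and the $\rho$-stability of the complementary monomial span. Your one slip is harmless: the coefficient of $y^{l+1}x^k$ in $\delta_u(y)$ is $\beta q^k-1$ rather than $\beta-q^{-k}$, but these differ by the nonzero factor $q^k$.
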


\begin{proof}
The condition imposed by the isotropy of an inner $\sigma$–derivation, as in the previous lemma, is given by the condition:
\begin{equation}\label{innersigma}
    a\sigma(b)=ba,\quad\forall b\in \Bbbk_q[x,y],
\end{equation}
where $a=\rho(w)-w$. Write $a=\sum_{i,j}\alpha_{ij}y^jx^i$, with $\alpha_{ij}\in\Bbbk$ and applying \eqref{innersigma} to $b=x$ and $b=y$, we obtain
\[
\sum_{i,j}\alpha_{ij}\alpha\,y^jx^{i+1}
=
\sum_{i,j}\alpha_{ij}q^{j} y^jx^{i+1},
\]
and
\[
\sum_{i,j}\alpha_{ij}\beta q^{i} y^{j+1}x^i
=
\sum_{i,j}\alpha_{ij} y^{j+1}x^i .
\]

Then, we obtain that $\alpha=q^{j}$ and
$\beta=q^{-i}$ for all $\alpha_{ij}\neq 0$. Thus, if there are no integers $i,j$ satisfying these equalities, all coefficients $\alpha_{ij}$ must vanish and therefore $a=0$, which proves the first statement.

Now suppose there exist integers $i_0,j_0\ge0$ such that $\alpha=q^{j_0}$ and $\beta=q^{-i_0}$. If $q$ is not a root of unity, this forces $i=i_0$ and $j=j_0$ for every nonzero coefficient, giving $a\in \Bbbk\cdot y^{j_0}x^{i_0}$. If $q$ has order $t$, then by $q^{j}=q^{j_0}$ and $q^{-i}=q^{-i_0}$ we have $j\equiv j_0 \pmod t$ and $ i\equiv i_0 \pmod t$.
Hence every term of $a$ is of the form
\[
\alpha_{i_0+at,\;j_0+bt}\,y^{j_0+bt}x^{i_0+at}
=
\alpha_{i_0+at,\;j_0+bt}(x^t)^a(y^t)^b\,y^{j_0}x^{i_0}.
\]
Therefore, $a\in \Bbbk[x^t,y^t]\cdot y^{j_0}x^{i_0}$, which proves the second statement.

Finally, if $\lambda(x^t,y^t)\in \Bbbk[x^t,y^t]$, then $\lambda(x^t,y^t)$ is central in $\Bbbk_q[x,y]$, and under the assumptions $\alpha=q^j$ and $\beta=q^{-i}$ we have
\[
\delta_{\lambda(x^t,y^t)\,y^j x^i}(x)
=
\lambda(x^t,y^t)(\alpha-q^j)(y^j x^i)x
=
0,
\]
and similarly
\[
\delta_{\lambda(x^t,y^t)\,y^j x^i}(y)
=
\lambda(x^t,y^t)(\beta q^i-1)(y^j x^i)y
=
0.
\]

Hence $\delta_{\lambda(x^t,y^t)\,y^j x^i}=0$. Therefore we may remove from $w$ its component in $\Bbbk[x^t,y^t]\cdot y^j x^i$, and then the condition $\rho(w)-w\in \Bbbk[x^t,y^t]\cdot y^j x^i$ forces $\rho(w)=w$.
\end{proof}

\begin{proposition}
\label{realization}
Let $\Bbbk_q[x,y]$ where $q$ is not a root of unity, and let $\sigma(x)=\alpha x$, $\sigma(y)=\beta y$ be a fixed automorphism of $\Bbbk_q[x,y]$. Then the class of isotropy groups of
$\sigma$-derivations of $\Bbbk_q[x,y]$ includes, up to isomorphism, all finite subgroups of $\aut(\Bbbk_q[x,y])$.
\end{proposition}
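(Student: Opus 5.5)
The plan is to realize every finite subgroup using only inner $\sigma$-derivations, for which the character description \eqref{eqcaracter} and Proposition \ref{pho(x)} give complete control. First I would recall that, since $\aut(\Bbbk_q[x,y])\cong{\Bbbk^\ast}^2$ and $\Bbbk$ is algebraically closed of characteristic zero, every finite subgroup $H\subseteq{\Bbbk^\ast}^2$ is a finite abelian group generated by at most two elements. Indeed, its projections to each factor $\Bbbk^\ast$ are cyclic, so $H$ embeds in a product of two cyclic groups. Hence $H\cong \Z_a\times\Z_b$ for some integers $a,b\geq 1$, with the trivial group corresponding to $a=b=1$ and the cyclic groups to $a=1$. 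It therefore suffices to exhibit, for every pair $a,b\geq1$, a $\sigma$-derivation $\delta$ with
\[
\aut_\delta(\Bbbk_q[x,y])=\{(\mu_1,\mu_2)\in{\Bbbk^\ast}^2 ;\ \mu_1^a=1,\ \mu_2^b=1\}\cong \Z_a\times\Z_b .
\]

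As candidate I would take $\delta=\delta_w$ with
\[
w=x^a+x^{2a}+y^b+y^{2b}.
\]
By Proposition \ref{independent}, the inner part is an $\aut(\Bbbk_q[x,y])$-submodule, and conjugation by $\rho$ sends $\delta_{y^jx^i}$ to $\mu_1^i\mu_2^j\delta_{y^jx^i}$. Distinct monomials give linearly independent inner $\sigma$-derivations whenever these are nonzero, since their values on $x$ and $y$ are distinct monomials. Consequently, by Proposition \ref{isotropy}(1) together with the character reduction \eqref{eqcaracter}, the isotropy of $\delta_w$ is the intersection of the kernels $\ker\chi_{i,j}$ over those monomials $y^jx^i$ of $w$ with $\delta_{y^jx^i}\neq0$. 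Equivalently, by Proposition \ref{pho(x)}, after discarding the possibly vanishing component, it is the set of $\rho$ with $\rho(w)=w$.

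The main point, and the only real obstacle, is the degenerate monomial. A monomial $y^jx^i$ gives $\delta_{y^jx^i}=0$ exactly when $\alpha=q^j$ and $\beta=q^{-i}$. Since $q$ is not a root of unity, at most one pair $(i_0,j_0)$ satisfies this. This is why I include both $x^a,x^{2a}$ and both $y^b,y^{2b}$: among the four monomials at most one is killed. The surviving equations still include at least one of $\mu_1^a=1$, $\mu_1^{2a}=1$ and at least one of $\mu_2^b=1$, $\mu_2^{2b}=1$. If $x^{2a}$ (resp. $y^{2b}$) is the killed one, the equation $\mu_1^a=1$ (resp. $\mu_2^b=1$) survives. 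If $x^a$ is killed, the surviving equation $\mu_1^{2a}=1$ is too weak, and the fix is to replace the pair $x^a,x^{2a}$ by $x^{2a},x^{3a}$. Then $\mu_1^{2a}=\mu_1^{3a}=1$ gives $\mu_1^a=1$ because $\gcd(2,3)=1$, and neither monomial is degenerate since the degenerate one is $x^a$. The same replacement handles $y^b$. With this adjustment the surviving equations are exactly $\mu_1^a=1$ and $\mu_2^b=1$.

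Finally I would check that $\delta_w\neq0$, which is clear since at most one monomial is degenerate. The intersection of kernels computed above is then exactly $\mu_a\times\mu_b\cong\Z_a\times\Z_b$. This gives every finite subgroup up to isomorphism, for the fixed $\sigma$.
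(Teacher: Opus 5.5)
Your proof is correct, but it takes a different route from the paper. The paper starts from an arbitrary finite subgroup $G^\ast<{\Bbbk^\ast}^2$. It imports the realization theorem for ordinary derivations \cite[Theorem 7]{SBVA} to obtain $w$ with $\{\rho : \rho(w)=w\}=G^\ast$. It then transfers this to the inner $\sigma$-derivation $\delta_w$ via Proposition~\ref{pho(x)}, so it aims to realize the subgroup $G^\ast$ itself, not merely its isomorphism class.

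You instead use that every finite subgroup of the two-dimensional torus is abstractly $\Z_a\times\Z_b$. You then write down an explicit witness realizing the standard representative $\{(\mu_1,\mu_2) ;\ \mu_1^a=1,\ \mu_2^b=1\}$. This is exactly what the statement (``up to isomorphism'') requires, and it makes the argument self-contained, with no appeal to \cite{SBVA}.

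Your approach also handles the degenerate monomial more carefully than the paper does. The paper simply says one ``may choose $w$ without terms in $\Bbbk\cdot y^jx^i$''. But if the witness supplied by \cite{SBVA} contains that monomial, deleting it can enlarge the fixed group. For instance, take $\alpha=1$, $\beta=q^{-1}$ and $w=x+y$: the ordinary isotropy of $\ad_w$ is trivial, while $\delta_w=\delta_y$ has isotropy $\{(\mu_1,1)\}$. So one really needs a witness that avoids the degenerate exponent from the outset. Your device settles this explicitly: you use two monomials per variable, and replace $x^a,x^{2a}$ by $x^{2a},x^{3a}$ when $x^a$ is the degenerate one.

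The price of your route is that it only produces the isomorphism type. Realizing the exact subgroup $G^\ast$ would additionally require choosing nonnegative generators of the lattice $\{(u,v)\in\Z^2 ;\ \chi_{u,v}|_{G^\ast}=1\}$ that avoid the degenerate exponent $(i_0,j_0)$.
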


\begin{proof}
Let $G^\ast<{\Bbbk^*}^2\cong\aut(\Bbbk_q[x,y])$ be a finite subgroup. By the corresponding realization result for ordinary derivations (\cite[Theorem 7]{SBVA}), there exists an element $w\in \Bbbk_q[x,y]$ such that the inner derivation $\ad_w:\Bbbk_q[x,y]\to \Bbbk_q[x,y]$, where $\ad_w(b)=wb-bw$, has isotropy group $\aut_{\ad_w}(\Bbbk_q[x,y])=G^\ast$.

On the other hand, since $q$ is not a root of unity, the center of $\Bbbk_q[x,y]$ is just $\Bbbk$, and the inner derivations satisfy $\ad_{w+\lambda}=\ad_w$, for all $\lambda\in\Bbbk$. Thus, we may assume that $w$ has no constant term.

Hence
\begin{equation}\label{adw}
\aut_{\ad_w}(\Bbbk_q[x,y])=\{\rho\in\aut(\Bbbk_q[x,y])\mid \rho(w)=w\}=G^\ast.
\end{equation}

Now consider the inner $\sigma$-derivation $\delta_w(b)=w\sigma(b)-bw$. If there are no integers $i,j\ge 0$ such that $\alpha=q^j$ and $\beta=q^{-i}$, then Proposition \ref{pho(x)} gives $\rho\in\aut_{\delta_w}(\Bbbk_q[x,y])$ if and only if $ \rho(w)=w$. Then, $\aut_{\delta_w}(\Bbbk_q[x,y])=G^\ast$

Assume now that there exist integers $i,j\geq 0$ such that $\alpha=q^j$ and $\beta=q^{-i}$. So, we may choose $w$ without terms in $\Bbbk\cdot y^j x^i$. Thus, also, $\rho\in\aut_{\delta_w}(\Bbbk_q[x,y])$ if and only if $ \rho(w)=w$. Then, $\aut_{\delta_w}(\Bbbk_q[x,y])=G^\ast$.

\end{proof}

\begin{proposition}\cite[Proposition 9]{SBVA}\label{obstruction}
Let $q$ be a primitive $p$-th root of unity. If $\sigma=\mathrm{id}$, then the class of isotropy groups of $\sigma$-derivations of $\Bbbk_q[x,y]$ does not include, up to isomorphism, the groups $\mathbb Z_{pr}\oplus \mathbb Z_{ps}$, with $r,s\neq 0$.
\end{proposition}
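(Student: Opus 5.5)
The plan is to read the statement through the character description \eqref{eqcaracter}, specialised to $\sigma=\mathrm{id}$, and turn it into a statement about sublattices of $\mathbb Z^2$. Since the statement is quoted from \cite[Proposition 9]{SBVA}, the paper's own proof is presumably a citation. The argument below is meant to recover that result from the machinery of this section, and it also identifies a family of derivations that the lattice argument does not exclude.

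\textbf{Step 1: the generators of $\Gamma(\delta)$.} For $\alpha=\beta=1$ the families $\mathcal P_x,\mathcal P_y$ do not occur, since that would need $\beta=q$ or $\alpha=q^{-1}$. The minimal exponents in Theorem \ref{classification} are $n=m=0$, so $D_x,D_y$ are the Euler derivations $x\mapsto x$ and $y\mapsto y$. Their central multiples $\lambda_\ell D_x,\lambda_\ell D_y$ contribute vectors $(ap,bp)\in p\mathbb Z^2$. By Propositions \ref{isotropy} and \ref{pho(x)}, the inner part contributes exactly the exponents $(i,j)$ of the monomials $y^jx^i$ of $w$ for which $\delta_{y^jx^i}\neq0$. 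Here $\delta_{y^jx^i}=0$ exactly when $q^j=1=q^{-i}$, that is when $(i,j)\in p\mathbb Z^2$. Hence every inner contribution lies \emph{outside} $p\mathbb Z^2$, while every Euler contribution lies \emph{inside} it.

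\textbf{Step 2: the group from the lattice.} Let $L\subseteq\mathbb Z^2$ be the lattice generated by $\Gamma(\delta)$. Because $\Bbbk$ is algebraically closed of characteristic zero, the group $\bigcap_{(u,v)\in L}\ker\chi_{u,v}$ is isomorphic to $\mathrm{Hom}(\mathbb Z^2/L,\Bbbk^\ast)$. When $L$ has rank $2$ this is non-canonically isomorphic to $\mathbb Z^2/L$. Take the Smith normal form $\mathbb Z^2/L\cong\mathbb Z_{d_1}\oplus\mathbb Z_{d_2}$ with $d_1\mid d_2$. Then $\aut_\delta\cong\mathbb Z_{pr}\oplus\mathbb Z_{ps}$ with $r,s\neq0$ forces $p\mid d_1$, since $(\mathbb Z^2/L)\otimes\mathbb Z_p\cong\mathbb Z_p^2$. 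This is equivalent to $L\subseteq p\mathbb Z^2$. By Step 1 this is impossible as soon as $w$ has a single monomial giving a nonzero inner derivation. The argument therefore disposes of every $\delta$ with a nonzero inner component, and in particular of all inner derivations, where the claim follows exactly as in \cite{SBVA}.

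\textbf{Step 3: the main obstacle, the purely Euler part.} The remaining case is $\delta=\lambda_1D_x+\lambda_2D_y$ with no inner component. Here $L\subseteq p\mathbb Z^2$ automatically, and the lattice argument gives no obstruction. Concretely, take $\lambda_1=x^p+y^p$. The conditions from Proposition \ref{isotropy}(4) are $\mu_1^p=1$ and $\mu_2^p=1$, so the isotropy group is $\mathbb Z_p\oplus\mathbb Z_p$. This is precisely the shape the statement claims never occurs, with $r=s=1$.

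So the statement can only hold if the class of derivations it refers to excludes this family. Before writing the final proof I would check what \cite[Proposition 9]{SBVA} actually assumes. There are two possibilities: it may work with inner derivations, or it may normalise derivations modulo the central Euler part. Either way, I would record that restriction explicitly in the statement and then invoke Steps 1--2.
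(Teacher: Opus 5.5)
Your diagnosis is right: as transcribed in this paper, the statement is false. The paper gives no argument of its own, only the citation of \cite[Proposition 9]{SBVA}.

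Your Step 3 example is confirmed by the paper's own Proposition~\ref{isotropy}(4). For $\sigma=\mathrm{id}$ one has $m=n=0$ and $t=p$. Taking $\lambda=x^{pr}+y^{ps}\in\Bbbk[x^p,y^p]$ yields exactly the conditions $\mu_1^{pr}=1$ and $\mu_2^{ps}=1$. The map $x\mapsto(x^{pr}+y^{ps})x$, $y\mapsto 0$ is an ordinary derivation, because $\lambda$ is central (criterion of Remark~\ref{criterionsigma}). Its isotropy group is therefore $\mathbb Z_{pr}\oplus\mathbb Z_{ps}$ for every $r,s\geq1$, not only for $r=s=1$.

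The error can be traced inside the paper. The remark after \eqref{eqcaracter} claims that for $\sigma=\mathrm{id}$ the summands $\lambda_1D_x,\lambda_2D_y$ impose no character equations. That holds only when $q$ is not a root of unity, i.e.\ when $\centro(\Bbbk_q[x,y])=\Bbbk$.

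Your Steps 1--2 are a correct and complete proof of the version that is true. If the inner component is nonzero, $w$ contains a monomial $y^jx^i$ with $(i,j)\notin p\mathbb Z^2$, so $L\not\subseteq p\mathbb Z^2$. On the other hand, $(\mathbb Z_{pr}\oplus\mathbb Z_{ps})\otimes\mathbb Z_p\cong\mathbb Z_p^2$ forces $p\mid d_1$, hence $L\subseteq p\mathbb Z^2$, a contradiction.

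Your lattice argument actually gives an exact dichotomy. When the inner component vanishes, $\Gamma(\delta)\subseteq p\mathbb Z^2$, so every finite isotropy group of such a derivation has the form $\mathbb Z_{pr}\oplus\mathbb Z_{ps}$, and each of these occurs. So the hypothesis to add is that $\delta$ has nonzero inner part, in particular that $\delta$ is a nonzero inner derivation. This is likely what \cite{SBVA} intends. It is also the only contrast the subsequent remark with $w=x^{pr}+y^{ps}$ really draws, since for $\sigma=\mathrm{id}$ that $w$ is central and induces the zero derivation. With that hypothesis recorded, your proof goes through as written.
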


\begin{remark}
Assume that $q$ is a primitive $p$-th root of unity, that $r,s\geq 1$, and that $\Bbbk^*$ contains primitive roots of unity of orders $pr$ and $ps$. Assume moreover that there are no integers $i,j\geq 0$ such that $\alpha=q^j$ and $\beta=q^{-i}$. Then the inner $\sigma$-derivation induced by $w=x^{pr}+y^{ps}$ has isotropy group
\[
\aut_{\delta_w}(\Bbbk_q[x,y])\cong \mathbb Z_{pr}\oplus \mathbb Z_{ps}.
\]

In particular, Proposition \ref{obstruction} does not admit a direct analogue for an arbitrary fixed automorphism $\sigma$. We also note that the algebraic closedness assumption can be weakened: it is enough to assume that $\Bbbk^\ast$ contains primitive roots of unity of orders
$pr$ and $ps$.
\end{remark}

\begin{remark}
Assume, as throughout this paper, that $\Bbbk$ is algebraically closed. Then, for any
positive integers $m,n$, the subgroup $H_n\oplus H_m<\Bbbk^\ast\oplus\Bbbk^\ast$ is finite, where $H_r$ denotes the group of all $r$-th roots of unity in
$\Bbbk$. Since $H_r\cong \mathbb Z_r$, Proposition \ref{realization} implies
that there exists a $\sigma$-derivation $\delta$ such that
\[
\aut_\delta(\Bbbk_q[x,y])\cong H_n\oplus H_m\cong \mathbb Z_n\oplus \mathbb Z_m.
\]
\end{remark}

\begin{remark}
The previous description, Equation \ref{eqcaracter}, also shows that one should not expect a realization result for arbitrary infinite subgroups of ${\Bbbk^\ast}^2$ in the
$\sigma$-derivation setting. Indeed, for example, if $\theta\in\Bbbk^\ast$ has infinite order, then
the cyclic subgroup
\[
\langle(\theta,1)\rangle
\subseteq {\Bbbk^\ast}^2
\]
is not Zariski closed in the torus. Thus,
$\langle(\theta,1)\rangle$ cannot be equal to $\aut_\delta(\Bbbk_q[x,y])$ for any
$\sigma$-derivation $\delta$. Therefore, as in the ordinary derivation case \cite[Remark 10]{SBVA}, the class of isotropy groups of $\sigma$-derivations does not contain all infinite subgroups of ${\Bbbk^\ast}^2$.
\end{remark}

\section{The singular case $q=-1$}\label{classification-1-section}

In this section we study $\sigma$-derivations of the quantum plane
$\Bbbk_{-1}[x,y]$. Recall that $\centro(\Bbbk_{-1}[x,y])=\Bbbk[x^2,y^2]$. Moreover, every automorphism of $\Bbbk_{-1}[x,y]$ is of one of the following two types (see, \cite[Proposition 1.4.4]{AlevChamarie}). The first type is given by 
\[
\rho(x)=\mu_1x, \quad \rho(y)=\mu_2y, \quad\mu_1,\mu_2\in\Bbbk^\ast,
\]
as these automorphisms are naturally identified with the algebraic torus ${\Bbbk^\ast}^2$ in $\aut(\Bbbk_{-1}[x,y])$, they will be called \emph{toric automorphisms}. The second type is given by
\[
\rho(x)=\mu_1y, \quad \rho(y)=\mu_2x, \quad \mu_1,\mu_2\in\Bbbk^\ast,
\]
these automorphisms interchange the generators $x$ and $y$, up to nonzero scalar multiples; so, they will be called \emph{flip automorphisms}. Thus,
\[
\aut(\Bbbk_{-1}[x,y])
\cong
(\Bbbk^\ast\times\Bbbk^\ast)\rtimes\langle\tau\rangle,
\]
where $\tau(x)=y$ and $\tau(y)=x$. 

Let $\sigma\in\aut(\Bbbk_{-1}[x,y])$ be fixed. As in the general setting of $\sigma$-derivations, conjugation by an automorphism $\rho \in \aut(\Bbbk_{-1}[x,y])$ preserves $\der_\sigma(\Bbbk_{-1}[x,y])$ precisely when $\rho$ commutes with $\sigma$. Therefore, the natural group acting on $\der_\sigma(\Bbbk_{-1}[x,y])$ is the centralizer
\[
G=C_{\aut(\Bbbk_{-1}[x,y])}(\sigma)
=
\{\rho\in\aut(\Bbbk_{-1}[x,y])\mid \rho\sigma=\sigma\rho\}.
\]

Thus, using the notation introduced in Section \ref{scharacterization}, for $\delta\in\der_\sigma(\Bbbk_{-1}[x,y])$, we consider the isotropy group
\[
\aut_\delta(\Bbbk_{-1}[x,y]):=\aut^G_\delta(\Bbbk_{-1}[x,y])
=
\{\rho\in G\mid \rho\delta=\delta\rho\}.
\]

\begin{remark}\label{criterionsigma}
Let $\Bbbk_q[x,y]$ and let $\sigma\in\aut(\Bbbk_q[x,y])$. Notice that a $\sigma$-derivation is determined by its values on $x$ and $y$, provided that these values are compatible with the defining relation $xy=qyx$. This criterion is valid for any $q\in\Bbbk^\ast$. Thus, if the map defined on the generators by $\partial(x)=u$,$\partial(y)=v$, with $u,v\in \Bbbk_q[x,y]$, satisfies
\[
u\sigma(y)+xv
=
q(v\sigma(x)+yu),
\]
then $\partial$ extends to a $\sigma$-derivation of $\Bbbk_q[x,y]$.
\end{remark}

The next theorem gives a characterization of the $\sigma$-derivations of $\Bbbk_{-1}[x,y]$, distinguishing between the toric and flip cases. The toric case follows from Jordan's  results. The flip case, however, is not treated in \cite{Jordan2025}.

\begin{theorem} \label{classification-1}
	Let $\sigma$ be an automorphism of the quantum plane $\Bbbk_{-1}[x,y]$.
	\begin{itemize}
		\item[(1)] If $\sigma(x)=\alpha x$ and $\sigma(y)=\beta y$, then the same conclusion as in Theorem \ref{classification} holds. That is, a $\sigma$-derivation $\delta$ on $\Bbbk_{-1}[x,y]$ can be decomposed in
		$$\delta = \mathcal{P}_x + \mathcal{P}_y + \lambda_1(x^2,y^2) D_x + \lambda_2(x^2,y^2) D_y + [w,\_]_{\sigma}$$
		where $\lambda_1, \lambda_2$ belong to the center of $\Bbbk_{-1}[x,y]$ and
		\begin{itemize}
			\item[$\bullet$] $[w,\_]_{\sigma}$ is a inner $\sigma$-derivation induced by some $w \in \Bbbk_{-1}[x,y]$;
			
			\item[$\bullet$] if $\beta = -1$ then $\mathcal{P}_x$ is a (non-inner) $\sigma$-derivation induced by
			$$\left\{ \begin{array}{l}
				\mathcal{P}_x(x) = a(y) \\
				\mathcal{P}_x(y) = 0
			\end{array} \right. \; , \quad \text{for some } a(y) \in \Bbbk[y],$$
			but if $\beta \neq -1$ then $\mathcal{P}_x = 0$;
			
			\item[$\bullet$] if $\alpha = -1$ then $\mathcal{P}_y$ is a (non-inner) $\sigma$-derivation induced by
			$$\left\{ \begin{array}{l}
				\mathcal{P}_y(x) = 0 \\
				\mathcal{P}_y(y) = b(x)
			\end{array} \right. \; , \quad \text{for some } b(x) \in \Bbbk[x]$$
			but if $\alpha \neq -1$ then $\mathcal{P}_y = 0$;
			
			\item[$\bullet$] if $\alpha = \pm 1$ and $\beta = \pm 1$ then $D_x$ and $D_y$ are the (non-inner) $\sigma$-derivations induced by
			$$\left\{ \begin{array}{l}
				D_x(x) = (y^n x^m) x \\
				D_x(y) = 0
			\end{array} \right. \qquad \text{and} \qquad \left\{ \begin{array}{l}
				D_y(x) = 0 \\
				D_y(y) = (y^n x^m) y
			\end{array} \right.$$
			where
			$$\left\{ \begin{array}{lll}
				\alpha = 1 & \implies & n = 0 \\
				\alpha = -1 & \implies & n = 1 \\
				\beta = 1 & \implies & m = 0 \\
				\beta = -1 & \implies & m = 1
			\end{array} \right.$$
			but if $\alpha \neq \pm 1$ or $\beta \neq \pm 1$ then $D_x = D_y = 0.$
		\end{itemize}
		
		\item[(2)] If $\sigma(x)=\alpha y$ and $\sigma(y)=\beta x$,
		then, for a $\sigma$-derivation $\delta$,
		\begin{itemize}
			\item[$\bullet$] if $\alpha \beta \neq \pm 1$ then $\delta$ is a inner $\sigma$-derivation;
			
			\item[$\bullet$] if $\alpha \beta = \pm 1$ then
			$$\delta = [w,\_]_{\sigma} + \sum\limits_{\alpha \beta = (-1)^k} \delta_k$$
			where each $\delta_k$ is a $\sigma$-derivation given by
			\begin{eqnarray*}
				\left\{ \begin{array}{l}
					\delta_k(x) = - \beta^{-1} ( b_0 x^k - b_1 y x^{k-1} + b_2 y^2 x^{k-2} + \ldots + (-1)^k b_k y^k ) \\ [0.5cm]
					\delta_k(y) = b_0 x^k + b_1 y x^{k-1} + b_2 y^2 x^{k-2} + \ldots + b_k y^k
				\end{array} \right.
			\end{eqnarray*}
			for certain $b_0, b_1, \ldots, b_k \in \Bbbk$. In this case, $\delta_k$ is a inner $\sigma$-derivation if and only if
			$$b_0 = - ( b_1 \beta + b_2 \beta^2 + \ldots + b_k \beta^k ).$$
		\end{itemize}
	\end{itemize}
\end{theorem}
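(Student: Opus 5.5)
Part (1) is the toric case, so we take it from Jordan's classification exactly as in Theorem \ref{classification}, now with $q=-1$ and $t=2$. Jordan's results do not require $q^2\neq 1$ for toric $\sigma$. The only work is to specialise the exceptional conditions. The condition $\beta=q$ becomes $\beta=-1$, and $\alpha=q^{-1}$ becomes $\alpha=-1$. For the $D_x,D_y$ families, the condition $\alpha=q^n$, $\beta=q^{-m}$ with minimal $n,m\ge 0$ forces $\alpha,\beta\in\{\pm1\}$. Minimality then gives $n\in\{0,1\}$ according to whether $\alpha=1$ or $-1$, and similarly $m\in\{0,1\}$ according to $\beta$. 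Each specialised family should also be checked directly against Remark \ref{criterionsigma}, to make sure no new coincidences occur at $q=-1$.

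For part (2) we work by hand, with $\sigma(x)=\alpha y$ and $\sigma(y)=\beta x$. The plan has four steps.

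\textbf{Step 1: reduce to a homogeneous equation.} The relation $xy=-yx$ and the action of $\sigma$ on monomials are compatible with the total-degree grading up to a shift. So $\der_\sigma$ decomposes into graded pieces: $\delta_k$ sends $x,y$ to forms of degree $k$. Write $\delta(x)=u$ and $\delta(y)=v$, with $u,v$ homogeneous of degree $k$ in the basis $y^jx^{k-j}$. By Remark \ref{criterionsigma}, the only condition is
\[
u\,\beta x + x v = -(v\,\alpha y + y u).
\]
Move $x$ and $y$ past monomials using $x\,y^jx^{i}=(-1)^j y^jx^{i+1}$ and $y^jx^i\,y=(-1)^i y^{j+1}x^i$. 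This turns the condition into a linear system on the coefficients $a_j$ of $u$ and $b_j$ of $v$.

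\textbf{Step 2: describe the inner derivations in each degree.} Compute the inner $\sigma$-derivations $[w,\_]_\sigma$ for $w=y^jx^{k-1-j}$ of degree $k-1$. Then compare the dimension of the solution space from Step 1 with the dimension of the image of $w\mapsto[w,\_]_\sigma$ on degree-$(k-1)$ forms.

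\textbf{Step 3: identify when the quotient is nonzero.} The expected outcome is that the linear system solves $u$ in terms of $v$ by the alternating-sign formula $a_j=-\beta^{-1}(-1)^jb_j$, up to a consistency condition. Generically, all solutions are inner. When a determinant-type condition holds, the quotient is one-dimensional, and that condition should be $\alpha\beta=(-1)^k$. Heuristically this is because $\sigma^2$ is the toric map $x\mapsto\alpha\beta x$, $y\mapsto\alpha\beta y$, which acts on degree-$k$ forms by $(\alpha\beta)^k$. The non-inner obstruction should appear when this scalar matches the sign $(-1)^k$ coming from commutation in $\Bbbk_{-1}[x,y]$. Because $\alpha\beta$ can equal $(-1)^k$ only when $\alpha\beta=\pm1$, this gives both bullets of (2) at once.

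\textbf{Step 4: derive the innerness criterion.} To get the stated criterion for $\delta_k$ to be inner, I will find an explicit linear functional on $(b_0,\dots,b_k)$ that vanishes on the image of inner derivations. I expect this functional to be evaluation-like, $b_0+b_1\beta+\dots+b_k\beta^k$. Its kernel then has the right codimension, which by the dimension count of Step 2 equals the image of the inner derivations.

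The main obstacle is the sign bookkeeping in Steps 1 and 3. In particular, I need to show that the inner image has codimension exactly $1$ when $\alpha\beta=(-1)^k$, and $0$ otherwise. I would first try small $k$, say $k=0,1,2$, to fix the signs. Then I would verify that the functional annihilates $[y^jx^{k-1-j},\_]_\sigma$ for each $j$, and that it is nonzero on $\delta_k$ with $b=(1,0,\dots,0)$.
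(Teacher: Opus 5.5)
Your proposal is correct and follows essentially the paper's route. Part (1) is read off from Jordan's toric classification by specialising to $q=-1$, $t=2$. Part (2) uses the homogeneous decomposition and the same coefficient system: $a$ is determined recursively by $b$, and the remaining constraint $a_k+\alpha b_k=0$ vanishes identically exactly when $\alpha\beta=(-1)^k$. For the generic case, testing it on $b=e_k$ gives $\beta^{-1}(\alpha\beta-(-1)^k)$.

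The only difference is how innerness is certified. The paper writes down an explicit $w_k$. You instead use a dimension count, based on the injectivity of $w\mapsto[w,\_]_\sigma$ on degree-$(k-1)$ forms, together with the functional $b\mapsto\sum_l\beta^l b_l$, which vanishes on every $[y^jx^{k-1-j},\_]_\sigma$. This works, and it justifies the ``if and only if'' innerness criterion more cleanly than the paper's brief remark.
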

\begin{proof}
The proof of item $(1)$ follows from Theorem~\ref{classification}, since in
this case $\sigma$ is toric. We prove item $(2)$. Let
$\sigma(x)=\alpha y$, $\sigma(y)=\beta x$. Write the homogeneous decomposition of $\delta=\sum_{k\geq0}\delta_k$, where
\[
\delta_k(x)=\sum_{l=0}^{k}a_l y^l x^{k-l},
\quad
\delta_k(y)=\sum_{l=0}^{k}b_l y^l x^{k-l}.
\]
Since $xy=-yx$, the condition that $\delta$ is a $\sigma$-derivation is equivalent to
\[
\delta(x)\sigma(y)+x\delta(y)
=
-(\delta(y)\sigma(x)+y\delta(x)).
\]
Comparing homogeneous components of total degree $k+1$, we obtain
\[
\delta_k(x)\sigma(y)+x\delta_k(y)
=
-(\delta_k(y)\sigma(x)+y\delta_k(x)).
\]
Thus, by Remark \ref{criterionsigma}, each $\delta_k$ is itself a $\sigma$-derivation.

Now we compare coefficients. Using $xy=-yx$, we obtain
\[
\delta_k(x)\sigma(y)+x\delta_k(y)
=
\sum_{l=0}^{k}
(a_l\beta+(-1)^l b_l)y^l x^{k+1-l},
\]
and
\[
-(\delta_k(y)\sigma(x)+y\delta_k(x))
=
\sum_{l=1}^{k+1}
((-1)^{k-l}b_{l-1}\alpha-a_{l-1})
y^l x^{k+1-l}.
\]

Therefore, the coefficients satisfy
\[
\left\{
\begin{array}{l}
a_0\beta+b_0=0,\\[0.2cm]
a_l\beta+(-1)^l b_l
=
(-1)^{k-l}b_{l-1}\alpha-a_{l-1},
\qquad 1\leq l\leq k,\\[0.2cm]
a_k+b_k\alpha=0.
\end{array}
\right.
\]

If $\alpha\beta\neq(-1)^k$, solving this system shows that
\[
b_0=-(b_1\beta+b_2\beta^2+\cdots+b_k\beta^k),
\]
and the corresponding $\sigma$-derivation is inner. More precisely, it is
induced by
\[
w_k
=
-\sum_{l=1}^{k}
\left(
\sum_{r=l}^{k} b_r\beta^{r-l}
\right)
y^{l-1}x^{k-l}.
\]

Therefore, the homogeneous components with $\alpha\beta\neq(-1)^k$ are absorbed into
the inner part.

If $\alpha\beta=(-1)^k$, then the same system gives
$a_l=-\beta^{-1}(-1)^l b_l$, with $l=0,\ldots,k$. Thus,
\[
\delta_k(x)
=
-\beta^{-1}
\sum_{l=0}^{k}(-1)^l b_l y^l x^{k-l},
\quad
\delta_k(y)
=
\sum_{l=0}^{k}b_l y^l x^{k-l}.
\]
Equivalently,
\[
\left\{
\begin{array}{l}
\displaystyle
\delta_k(x)=
-\beta^{-1}
(
b_0x^k-b_1yx^{k-1}+b_2y^2x^{k-2}
+\cdots+(-1)^k b_ky^k
),\\[0.3cm]
\displaystyle
\delta_k(y)=
b_0x^k+b_1yx^{k-1}+b_2y^2x^{k-2}
+\cdots+b_ky^k.
\end{array}
\right.
\]

Moreover, such a component $\delta_k$ is inner if and only if
\[
b_0=-(b_1\beta+b_2\beta^2+\cdots+b_k\beta^k).
\]

Indeed, this is exactly the condition under which $\delta_k$ is induced by an
element of degree $k-1$.

Therefore, if $\alpha\beta\neq\pm1$, then no integer $k$ satisfies
$\alpha\beta=(-1)^k$, and every homogeneous component is inner. Therefore, $\delta$ is inner. If $\alpha\beta=\pm1$, the only possible non-inner components are
those with $\alpha\beta=(-1)^k$. Collecting all inner components into a single inner $\sigma$-derivation
$[w,\_]_\sigma$, we obtain
\[
\delta=[w,\_]_\sigma+\sum_{\alpha\beta=(-1)^k}\delta_k,
\]
with $\delta_k$ as above. This proves item $(2)$.
\end{proof}

Using the classification obtained above, we study, as in
Section \ref{scharacterization}, the isotropy groups of $\sigma$-derivations in the singular case $q=-1$. Since flip automorphisms may occur, we begin by describing the centralizer of $\sigma$ in $\aut(\Bbbk_{-1}[x,y])$.

\begin{lemma}\label{centralizador}
Let $\sigma \in \aut(\Bbbk_{-1}[x,y])$.

\begin{enumerate}
\item[(i)] Suppose that $\sigma(x)=\alpha x$, $\sigma(y)=\beta y$ is toric. Then, 
\[
C_{\aut(\Bbbk_{-1}[x,y])}(\sigma)
=
\begin{cases}
{\Bbbk^\ast}^2, & \text{if } \alpha\neq\beta,\\[0.15cm]
\aut(\Bbbk_{-1}[x,y]), & \text{if } \alpha=\beta.
\end{cases}
\]

\item[(ii)] Suppose that $\sigma(x)=\alpha y$, $\sigma(y)=\beta x$ is flip. Then,
\[
C_{\aut(\Bbbk_{-1}[x,y])}(\sigma)
=
G_0\cup G_1,
\]
where
\[
G_0=\{\rho_\mu:\rho_\mu(x)=\mu x,\ \rho_\mu(y)=\mu y,\ \mu\in\Bbbk^\ast\},
\]
and
\[
G_1=\{\eta_\mu:\eta_\mu(x)=\mu y,\ 
\eta_\mu(y)=\beta\alpha^{-1}\mu x,\ \mu\in\Bbbk^\ast\}.
\]

\end{enumerate}
\end{lemma}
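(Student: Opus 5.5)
Since every element of $\aut(\Bbbk_{-1}[x,y])$ is either toric or flip, and an automorphism is determined by its values on $x$ and $y$, I will compute $\rho\sigma$ and $\sigma\rho$ on the generators for each of the four combinations of types of $\rho$ and $\sigma$, and compare coefficients. Throughout I use that the monomials $x$ and $y$ are linearly independent, so two maps agree on $x$ (resp. $y$) if and only if the scalar coefficients coincide.

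For (i), let $\sigma(x)=\alpha x$, $\sigma(y)=\beta y$. Toric automorphisms commute with $\sigma$ because the torus ${\Bbbk^\ast}^2$ is abelian. Now take a flip $\rho(x)=\mu_1y$, $\rho(y)=\mu_2x$. Then $\rho\sigma(x)=\alpha\mu_1 y$ and $\sigma\rho(x)=\mu_1\beta y$. Similarly, $\rho\sigma(y)=\beta\mu_2x$ and $\sigma\rho(y)=\mu_2\alpha x$. Hence $\rho$ commutes with $\sigma$ if and only if $\alpha=\beta$. So when $\alpha\neq\beta$ the centralizer is exactly the torus, and when $\alpha=\beta$ it is the whole group.

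For (ii), let $\sigma(x)=\alpha y$, $\sigma(y)=\beta x$. First take $\rho$ toric with parameters $\mu_1,\mu_2$. Then $\rho\sigma(x)=\alpha\mu_2 y$ and $\sigma\rho(x)=\mu_1\alpha y$, which forces $\mu_1=\mu_2$. The equation on $y$ gives the same condition, so the toric part of the centralizer is $G_0$. Next take $\rho$ a flip, $\rho(x)=\mu_1 y$, $\rho(y)=\mu_2 x$. Then $\rho\sigma(x)=\alpha\mu_2 x$ and $\sigma\rho(x)=\mu_1\beta x$, so $\mu_2=\beta\alpha^{-1}\mu_1$. On $y$ we get $\rho\sigma(y)=\beta\mu_1 y$ and $\sigma\rho(y)=\mu_2\alpha y$, which is the same equation. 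Writing $\mu=\mu_1$ gives exactly the family $G_1$.

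Since every automorphism is toric or flip, the centralizer is $G_0\cup G_1$. There is no real obstacle here. The only care needed is bookkeeping the order of composition and confirming that the $x$- and $y$-equations are consistent rather than overdetermined.
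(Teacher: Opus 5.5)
Your proposal is correct and follows the same approach as the paper: a direct comparison of $\rho\sigma$ and $\sigma\rho$ on the generators $x$ and $y$, split according to whether $\rho$ is toric or a flip. Your coefficient computations, including the consistency of the $x$- and $y$-equations and the resulting condition $\mu_2=\beta\alpha^{-1}\mu_1$ in the flip--flip case, are exactly what the paper's one-line proof leaves to the reader.
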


\begin{proof}
The result follows from a direct comparison of $\rho\sigma$ and $\sigma\rho$, according to whether $\rho \in\aut(\Bbbk_{-1}[x,y])$ is toric or a flip automorphism.
\end{proof}

\begin{remark} \label{toric}
Suppose that $\sigma \in \aut(\Bbbk_{-1}[x,y])$ is toric. If we restrict to the toric part of the centralizer, then the isotropy is
described exactly as in the case $q\neq\pm1$, with $t=2$. More precisely, the families appearing in the classification contribute with character equations in $(\mu_1,\mu_2)\in{\Bbbk^\ast}^2$. In particular, the same arithmetic criterion in the lattice $\Gamma(\delta) \subset \mathbb Z^2$ applies to the toric part of the isotropy.

However, when $\alpha=\beta$, the centralizer also contains flip
automorphisms. These automorphisms may interchange the families
$\mathcal P_x$ and $\mathcal P_y$, and also interchange the roles of $D_x$ and $D_y$. Therefore, the full isotropy group is not determined only by the lattice $\Gamma(\delta)$; one must also impose compatibility conditions on the coefficients of the corresponding components.
\end{remark}

We now study the cases where flip automorphisms may contribute to the isotropy group. We first consider the toric case $\alpha=\beta$, for which the centralizer is the whole automorphism group of $\Bbbk_{-1}[x,y]$. Then we turn to the case where $\sigma$ is a flip automorphism and, using Lemma \ref{centralizador}, analyze how each component of the centralizer acts on the families appearing in the decomposition of $\sigma$-derivations.

\begin{proposition}\label{isotoricequal}
Let $\sigma$ be the automorphism given by $\sigma(x)=\gamma x$, $\sigma(y)=\gamma y$ and let $\eta_{u,v}(x)=uy$, $\eta_{u,v}(y)=vx$ be a flip automorphism with $u,v, \gamma\in\Bbbk^\ast$. Let
\[
\delta
=
[w,\_]_\sigma+\mathcal P_x+\mathcal P_y+\lambda_1D_x+\lambda_2D_y
\]
be the decomposition of $\delta$ given by Theorem \ref{classification-1}. Then $\eta_{u,v}\in\aut_\delta(\Bbbk_{-1}[x,y])$ if and only if the following conditions
hold:

\begin{enumerate}
\item[(i)] For the inner component, $\eta_{u,v}(w)-w\in \mathcal Z_\gamma(\Bbbk_{-1}[x,y])$, where
\[
\mathcal Z_\gamma(\Bbbk_{-1}[x,y])=\{a\in \Bbbk_{-1}[x,y]\mid a\sigma(b)=ba,\ \text{for all }b\in \Bbbk_{-1}[x,y]\}.
\]
More explicitly,
\[
\mathcal Z_\gamma(\Bbbk_{-1}[x,y])=
\begin{cases}
0, & \gamma\neq\pm1,\\[0.1cm]
\Bbbk[x^2,y^2], & \gamma=1,\\[0.1cm]
\Bbbk[x^2,y^2]\,yx, & \gamma=-1.
\end{cases}
\]

\item[(ii)] If $\gamma=-1$, so that the families $\mathcal P_x$ and
$\mathcal P_y$ may occur, and then
\[
a(y)=v^{-1}b(uy)
\quad\text{and}\quad
b(x)=u^{-1}a(vx).
\]

\item[(iii)] If $\gamma=\pm1$, so that the families $D_x$ and $D_y$ may occur, then
\[
\lambda_1=c_\gamma(u,v)\,\eta_{u,v}(\lambda_2)
\quad\text{and}\quad
\lambda_2=c_\gamma(u,v)\,\eta_{u,v}(\lambda_1),
\]
where
\[
c_\gamma(u,v)=
\begin{cases}
1, & \gamma=1,\\
-uv, & \gamma=-1.
\end{cases}
\]

\end{enumerate}
\end{proposition}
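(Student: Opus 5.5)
We compute $\eta\delta\eta^{-1}$ for $\eta=\eta_{u,v}$, decompose it with Theorem \ref{classification-1}, and then compare with $\delta$ using uniqueness of the decomposition.

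\textbf{Step 1: setting up the conjugation.} Since $\sigma$ is scalar on generators, $\sigma(x)=\gamma x$ and $\sigma(y)=\gamma y$. Then $\eta\sigma=\sigma\eta$, so $\eta\delta\eta^{-1}$ is again a $\sigma$-derivation. The condition $\eta\in\aut_\delta$ means $\eta\delta\eta^{-1}=\delta$. We decompose $\eta\delta\eta^{-1}$ term by term:
\[
\eta\delta\eta^{-1}=[\eta(w),\_]_\sigma+\eta\mathcal P_x\eta^{-1}+\eta\mathcal P_y\eta^{-1}+\eta(\lambda_1D_x)\eta^{-1}+\eta(\lambda_2D_y)\eta^{-1}.
\]
We will show three things. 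First, $\eta$ sends the inner family to itself. Second, it swaps the $\mathcal P_x$ and $\mathcal P_y$ families. Third, it swaps $Z D_x$ and $Z D_y$. The $q=-1$ analogue of Proposition \ref{independent} (independence of the families, coming from the toric classification) then lets us compare components separately.

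\textbf{Step 2: the three families.}

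For the inner part, $\eta[w,\_]_\sigma\eta^{-1}=[\eta(w),\_]_\sigma$, because $\eta$ commutes with $\sigma$. Hence the inner components agree iff $[\eta(w)-w,\_]_\sigma=0$, i.e. iff $\eta(w)-w\in\mathcal Z_\gamma$. To describe $\mathcal Z_\gamma$ explicitly, we apply $a\sigma(b)=ba$ to $b=x$ and $b=y$ on monomials $y^jx^i$, as in the proof of Proposition \ref{pho(x)} with $\alpha=\beta=\gamma$ and $q=-1$. This gives $\gamma=(-1)^j$ and $\gamma=(-1)^{-i}$. So $\mathcal Z_\gamma=0$ unless $\gamma=\pm1$. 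For $\gamma=1$ we need $i,j$ even, giving $\Bbbk[x^2,y^2]$. For $\gamma=-1$ we need $i,j$ odd, giving $\Bbbk[x^2,y^2]yx$.

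For the $\mathcal P$ families (present when $\gamma=-1$), note $\eta^{-1}(x)=v^{-1}y$ and $\eta^{-1}(y)=u^{-1}x$. Then
\[
(\eta\mathcal P_x\eta^{-1})(y)=u^{-1}\eta(a(y))=u^{-1}a(vx),\qquad (\eta\mathcal P_x\eta^{-1})(x)=0,
\]
so $\eta\mathcal P_x\eta^{-1}$ lies in the $\mathcal P_y$ family with polynomial $u^{-1}a(vx)$. Symmetrically, $\eta\mathcal P_y\eta^{-1}$ lies in the $\mathcal P_x$ family with polynomial $v^{-1}b(uy)$. Matching components gives (ii).

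For the $D$ families (present when $\gamma=\pm1$), take $\gamma=1$ first, so $m=n=0$, $D_x(x)=x$ and $D_y(y)=y$. Then
\[
\eta(\lambda_1D_x)\eta^{-1}(y)=\eta(\lambda_1)\,u^{-1}\eta(x)=\eta(\lambda_1)\,y,\qquad \eta(\lambda_1D_x)\eta^{-1}(x)=0,
\]
so this equals $\eta(\lambda_1)D_y$. Likewise $\eta(\lambda_2D_y)\eta^{-1}=\eta(\lambda_2)D_x$. Comparing components gives $\lambda_1=\eta(\lambda_2)$ and $\lambda_2=\eta(\lambda_1)$, so $c_1=1$. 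Here $\eta(\lambda)$ is again central, since $\eta$ maps $x^2\mapsto u^2y^2$ and $y^2\mapsto v^2x^2$.

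For $\gamma=-1$ we have $m=n=1$, $D_x(x)=yx\cdot x$ and $D_y(y)=yx\cdot y$. Now
\[
\eta(\lambda_1D_x)\eta^{-1}(y)=u^{-1}\eta(\lambda_1)\,\eta(yx)\,\eta(x).
\]
Since $\eta(yx)=vx\cdot uy=uv\,xy=-uv\,yx$ and $\eta(x)=uy$, this becomes
\[
u^{-1}\eta(\lambda_1)(-uv\,yx)(uy)=-uv\,\eta(\lambda_1)\,yx\,y.
\]
So $\eta(\lambda_1D_x)\eta^{-1}=-uv\,\eta(\lambda_1)D_y$. By the same computation, $\eta(\lambda_2D_y)\eta^{-1}=-uv\,\eta(\lambda_2)D_x$. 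This gives $c_{-1}=-uv$.

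\textbf{Step 3: comparing components (main obstacle).} The delicate point is justifying the comparison itself. Proposition \ref{independent} gives independence only under the toric action, and $\eta$ permutes the families rather than stabilizing each one. So we argue directly: the direct sum decomposition of $\der_\sigma$ into $\mathrm{Ad}^\sigma$, $\mathcal P_x$, $\mathcal P_y$, $ZD_x$ and $ZD_y$ holds as vector spaces, by uniqueness in Theorem \ref{classification-1}(1). Step 2 shows $\eta\delta\eta^{-1}$ decomposes as
\[
[\eta(w),\_]_\sigma+(\text{the }\mathcal P_x\text{ term from }\eta\mathcal P_y\eta^{-1})+(\text{the }\mathcal P_y\text{ term from }\eta\mathcal P_x\eta^{-1})+(\text{terms in }ZD_x,\ ZD_y).
\]
Equality with $\delta$ therefore holds iff the corresponding components coincide. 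This yields (i)–(iii), and the converse direction is immediate from the same formulas.
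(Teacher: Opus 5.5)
Your proof is correct and follows essentially the same route as the paper. Conjugation by $\eta_{u,v}$ sends the inner part to $[\eta_{u,v}(w),\cdot]_\sigma$. It swaps the $\mathcal P_x$ and $\mathcal P_y$ families (with polynomials $v^{-1}b(uy)$ and $u^{-1}a(vx)$), and it swaps $Z D_x$ with $Z D_y$ up to the factor $c_\gamma(u,v)\,\eta_{u,v}(\lambda_i)$. Conditions (i)--(iii) then follow by comparing components.

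Like the paper, you take directness of the sum at $q=-1$ from the classification rather than proving it. This is harmless, since for each generator the values of the inner, $\mathcal P$ and $Z D$ parts have pairwise disjoint monomial supports, so the components can indeed be compared separately.
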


\begin{proof}
Since $\eta_{u,v}$ commutes with $\sigma$, we have $\eta_{u,v}[w,\_]_\sigma\eta_{u,v}^{-1}=[\eta_{u,v}(w),\_]_\sigma$. Thus, the inner component is fixed if and only if $[\eta_{u,v}(w)-w,\_]_\sigma=0$, that is, if and only if $\eta_{u,v}(w)-w\in \mathcal Z_\gamma(\Bbbk_{-1}[x,y])$, where 
\[
\mathcal Z_\gamma(\Bbbk_{-1}[x,y])=\{a\in \Bbbk_{-1}[x,y]\mid a\sigma(b)=ba,\ \text{for all }b\in \Bbbk_{-1}[x,y]\}.
\]

Let us describe $\mathcal Z_\gamma(\Bbbk_{-1}[x,y])$. Write an element of $\Bbbk_{-1}[x,y]$ as a linear combination of monomials $y^jx^i$. For such a monomial, the condition $y^jx^i\sigma(x)=x y^jx^i$ gives $\gamma y^jx^{i+1}=(-1)^j y^jx^{i+1}$, and then $\gamma=(-1)^j$. Similarly, the condition $y^jx^i\sigma(y)=y y^jx^i$
gives $\gamma(-1)^i y^{j+1}x^i=y^{j+1}x^i$, and so $\gamma=(-1)^i$.
Thus, a nonzero monomial $y^jx^i$ belongs to $\mathcal Z_\gamma(\Bbbk_{-1}[x,y]))$ only if $\gamma=(-1)^i=(-1)^j$. Therefore, $\mathcal Z_\gamma(\Bbbk_{-1}[x,y]))=0$, if $\gamma\neq\pm1$, $\mathcal Z_1(\Bbbk_{-1}[x,y])=\Bbbk[x^2,y^2]$ and $\mathcal Z_{-1}(\Bbbk_{-1}[x,y])=\Bbbk[x^2,y^2]yx$. This proves $(i)$.

Now suppose that $\gamma=-1$, so that the families $\mathcal P_x$ and $\mathcal P_y$ may occur. Assume $\mathcal P_x(x)=a(y)$ and $\mathcal P_x(y)=0$. Then
\[
(\eta_{u,v}\mathcal P_x\eta_{u,v}^{-1})(x)
=
\eta_{u,v}\mathcal P_x(v^{-1}y)=0
\]
and
\[
(\eta_{u,v}\mathcal P_x\eta_{u,v}^{-1})(y)
=
\eta_{u,v}\mathcal P_x(u^{-1}x)
=
u^{-1}\eta_{u,v}(a(y))
=
u^{-1}a(vx).
\]

Thus, $\eta_{u,v}\mathcal P_x\eta_{u,v}^{-1}$ is a $\sigma$-derivation of type $\mathcal P_y$. Similarly, $\eta_{u,v}\mathcal P_y\eta_{u,v}^{-1}$ is a $\sigma$-derivation of type $\mathcal P_x$. Therefore, $\mathcal P_x+\mathcal P_y$ is fixed if and only if $a(y)=v^{-1}b(uy)$ and $b(x)=u^{-1}a(vx)$. This proves $(ii)$.

It remains to study the components $D_x$ and $D_y$. Suppose first that $\gamma=1$. For $\lambda\in \centro(\Bbbk_{-1}[x,y]))=\Bbbk[x^2,y^2]$, we obtain $(\eta_{u,v}\lambda D_x\eta_{u,v}^{-1})(x)=0$ and
\[
(\eta_{u,v}\lambda D_x\eta_{u,v}^{-1})(y)
=
\eta_{u,v}\lambda D_x(u^{-1}x)
=
u^{-1}\eta_{u,v}(\lambda x)
=
\eta_{u,v}(\lambda)y.
\]

Thus, $\eta_{u,v}(\lambda D_x)\eta_{u,v}^{-1}=\eta_{u,v}(\lambda)D_y$. And also, $\eta_{u,v}(\lambda D_y)\eta_{u,v}^{-1}
=\eta_{u,v}(\lambda)D_x$.

Now suppose that $\gamma=-1$. Then
\[
D_x(x)=(yx)x,\quad D_x(y)=0,
\]
and
\[
D_y(x)=0,\quad D_y(y)=(yx)y.
\]

Let $\lambda\in Z(\Bbbk_{-1}[x,y]))$. We obtain $(\eta_{u,v}\lambda D_x\eta_{u,v}^{-1})(x)=0$ and
\[
\begin{aligned}
(\eta_{u,v}\lambda D_x\eta_{u,v}^{-1})(y)
&=
\eta_{u,v}\lambda D_x(u^{-1}x)\\
&=
u^{-1}\eta_{u,v}(\lambda (yx)x)\\
&=
u^{-1}\eta_{u,v}(\lambda)\eta_{u,v}(y)\eta_{u,v}(x)^2\\
&=
u^{-1}\eta_{u,v}(\lambda)(vx)(uy)^2\\
&=
uv\,\eta_{u,v}(\lambda)y^2x.
\end{aligned}
\]

Since $(yx)y=-y^2x$, we obtain $(\eta_{u,v}\lambda D_x\eta_{u,v}^{-1})(y)=-uv\,\eta_{u,v}(\lambda)(yx)y$. Thus,
\[
\eta_{u,v}(\lambda D_x)\eta_{u,v}^{-1}
=
-uv\,\eta_{u,v}(\lambda)D_y.
\]

In the same way, $\eta_{u,v}(\lambda D_y)\eta_{u,v}^{-1}
=-uv\,\eta_{u,v}(\lambda)D_x$. Therefore, in both cases $\gamma=\pm1$, we have
\[
\eta_{u,v}(\lambda D_x)\eta_{u,v}^{-1}
=
c_\gamma(u,v)\eta_{u,v}(\lambda)D_y
\]
and
\[
\eta_{u,v}(\lambda D_y)\eta_{u,v}^{-1}
=
c_\gamma(u,v)\eta_{u,v}(\lambda)D_x,
\]
where
\[
c_\gamma(u,v)=
\begin{cases}
1, & \gamma=1,\\
-uv, & \gamma=-1.
\end{cases}
\]
Consequently, the component $\lambda_1D_x+\lambda_2D_y$ is fixed by $\eta_{u,v}$ if and only if
\[
\lambda_1=c_\gamma(u,v)\eta_{u,v}(\lambda_2)
\quad\text{and}\quad
\lambda_2=c_\gamma(u,v)\eta_{u,v}(\lambda_1).
\]
This proves $(iii)$.

Finally, since the decomposition in Theorem \ref{classification-1} is unique, and since $\eta_{u,v}$ sends inner components to inner components, interchanges $\mathcal P_x$ with $\mathcal P_y$, and interchanges $D_x$ with $D_y$, the equality $\eta_{u,v}\delta\eta_{u,v}^{-1}=\delta$ is equivalent to the three conditions above. 
\end{proof}

As a consequence, the next corollary gives an explicit description of the isotropy groups of ordinary derivations of $\Bbbk_{-1}[x,y]$. This fills the gap left in the singular case $q=-1$, which was not treated by Santana, Baltazar, Vinciguerra and Araujo, in \cite{SBVA}, and was left there as an open problem.

\begin{corollary}\label{ordinary-1}
Let $\delta\in\der(\Bbbk_{-1}[x,y])$ be an ordinary derivation.
Then
\[
\delta=[w,\_]+\lambda_1D_x+\lambda_2D_y,
\]
where $\lambda_1,\lambda_2\in \centro(\Bbbk_{-1}[x,y])$, $D_x(x)=x, D_x(y)=0, D_y(x)=0$ and $D_y(y)=y$. Furthermore, the isotropy group $\aut_\delta(\Bbbk_{-1}[x,y])$ is the union of its toric and flip parts:

\begin{enumerate}
\item[(i)] An automorphism $\rho_{\mu_1,\mu_2}(x)=\mu_1x$, $\rho_{\mu_1,\mu_2}(y)=\mu_2y$ belongs to isotropy group of $\delta$ if and only if $\rho_{\mu_1,\mu_2}(w)-w\in \centro(\Bbbk_{-1}[x,y])$ and $\rho_{\mu_1,\mu_2}(\lambda_1)=\lambda_1$, $\rho_{\mu_1,\mu_2}(\lambda_2)=\lambda_2$.

\item[(ii)] An automorphism $\eta_{u,v}(x)=uy$, $\eta_{u,v}(y)=vx$ belongs to isotropy group of $\delta$ if and only if $\eta_{u,v}(w)-w\in \centro(\Bbbk_{-1}[x,y])$ and $\eta_{u,v}(\lambda_2)=\lambda_1$, $\eta_{u,v}(\lambda_1)=\lambda_2$.

\end{enumerate}
\end{corollary}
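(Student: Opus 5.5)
The plan is to specialize Theorem \ref{classification-1}(1) and Proposition \ref{isotoricequal} to $\sigma=\mathrm{id}$, that is, $\alpha=\beta=\gamma=1$. For the decomposition, $\beta=1\neq-1$ and $\alpha=1\neq-1$, so $\mathcal P_x=\mathcal P_y=0$. Since $\alpha=\beta=1$, the families $D_x,D_y$ occur with $n=m=0$, so $D_x(x)=x$, $D_x(y)=0$, $D_y(x)=0$, $D_y(y)=y$, and $\lambda_1,\lambda_2\in\Bbbk[x^2,y^2]=\centro(\Bbbk_{-1}[x,y])$. This gives $\delta=[w,\_]+\lambda_1D_x+\lambda_2D_y$. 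By Lemma \ref{centralizador}(i) with $\alpha=\beta$, the centralizer of $\mathrm{id}$ is the whole group $\aut(\Bbbk_{-1}[x,y])$. Every automorphism is toric or a flip, so the isotropy group is the union of its toric and flip parts.

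For (ii), I would apply Proposition \ref{isotoricequal} with $\gamma=1$. Condition (i) there gives $\eta_{u,v}(w)-w\in\mathcal Z_1=\Bbbk[x^2,y^2]=\centro$, which is consistent with Lemma \ref{lema_interna_geral}. Condition (ii) is vacuous because $\gamma\neq-1$. Condition (iii), with $c_1(u,v)=1$, gives exactly $\lambda_1=\eta_{u,v}(\lambda_2)$ and $\lambda_2=\eta_{u,v}(\lambda_1)$.

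For (i), I would argue directly, in parallel with Proposition \ref{independent}, since the paper treats the toric part only through Remark \ref{toric}. First, $\rho=\rho_{\mu_1,\mu_2}$ satisfies $\rho[w,\_]\rho^{-1}=[\rho(w),\_]$. Hence, by Lemma \ref{lema_interna_geral}, the inner part is fixed iff $\rho(w)-w\in\centro$. Next, $\rho(\lambda D_x)\rho^{-1}(x)=\rho(\lambda\,\mu_1^{-1}\mu_1 x)\cdot$, which simplifies to $\rho(\lambda)x$; thus $\rho(\lambda D_x)\rho^{-1}=\rho(\lambda)D_x$, and similarly $\rho(\lambda D_y)\rho^{-1}=\rho(\lambda)D_y$. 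The summands $\mathrm{Ad}$, $\centro D_x$ and $\centro D_y$ are $\rho$-stable, and their sum is direct. Lemma \ref{lemaindep}, applied twice, or equivalently uniqueness of the decomposition, then splits the fixed-point condition componentwise. Since $D_x\neq0$, $D_y\neq0$ and $\centro$ is a domain, this gives $\rho(\lambda_i)=\lambda_i$.

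The only genuinely delicate step is the independence/uniqueness used to separate components. It requires that no nonzero combination $\lambda_1D_x+\lambda_2D_y$ is inner when $q=-1$. I would check this directly: an inner derivation $[w,\_]$ sends $x$ to $[w,x]$, and for a monomial $y^jx^i$ the commutator with $x$ is $((-1)^j-1)y^jx^{i+1}$, which vanishes when $j$ is even. Hence $[w,x]$ only contains monomials $y^jx^{i+1}$ with $j$ odd. On the other hand, $\lambda_1 x$ with $\lambda_1\in\Bbbk[x^2,y^2]$ only involves even powers of $y$. The analogous parity argument applies to $y$. Together these give $\mathrm{Ad}\cap(\centro D_x\oplus\centro D_y)=0$, which is the input needed for Proposition \ref{isotoricequal} in this case.
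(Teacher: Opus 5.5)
Your proposal is correct and follows the paper's proof, which likewise just specializes Theorem \ref{classification-1}(1) and Proposition \ref{isotoricequal} to $\gamma=1$, so that $\mathcal P_x=\mathcal P_y=0$, $\mathcal Z_1=\centro(\Bbbk_{-1}[x,y])$ and $c_1(u,v)=1$; your direct toric computation and your parity check that $\mathrm{Ad}\cap(\centro D_x\oplus\centro D_y)=0$ only make explicit what the paper takes from Remark \ref{toric}, Proposition \ref{independent} and the uniqueness of the decomposition. One small slip: the intermediate step should read $\rho(\lambda D_x(\mu_1^{-1}x))=\mu_1^{-1}\rho(\lambda)\rho(x)=\rho(\lambda)x$, although your conclusion $\rho(\lambda D_x)\rho^{-1}=\rho(\lambda)D_x$ is right.
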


\begin{proof}
This follows from Theorem \ref{classification-1} and
Proposition \ref{isotoricequal} by taking $\sigma=\mathrm{id}$, that is, $\gamma=1$. In this case, $\mathcal P_x=\mathcal P_y=0$ and $\mathcal Z_\sigma(\Bbbk_{-1}[x,y])=\centro(\Bbbk_{-1}[x,y])=\Bbbk[x^2,y^2]$. Moreover, $c_\gamma(u,v)=1$. Therefore, the stated conditions are exactly the
toric and flip conditions obtained in the previous results.
\end{proof}

\begin{proposition}\label{isoflip}
Let $\sigma$ be the automorphism given by $\sigma(x)=\alpha y$, $\sigma(y)=\beta x$. Let
\[
\delta=[w,\_]_\sigma+\sum_{\alpha\beta=(-1)^k}\delta_k
\]
be the decomposition of $\delta$ given by Theorem \ref{classification-1}, where the nonzero summands $\delta_k$ are chosen to be non-inner. Then, $\aut_\delta(\Bbbk_{-1}[x,y])=\aut^{G_0\cup G_1}_\delta(\Bbbk_{-1}[x,y])$ is described as follows:

\begin{enumerate}
\item[(i)] Let $\rho_\mu \in G_0$. Then, $\rho_\mu\in\aut_\delta(\Bbbk_{-1}[x,y])$ if and only if $\rho_\mu(w)=w$ and $\mu^{k-1}=1$, for every nonzero summand $\delta_k$.

\item[(ii)] Let $\eta_\mu \in G_1$. Then, $\eta_\mu\in\aut_\delta(\Bbbk_{-1}[x,y])$ if and only if $\eta_\mu(w)=w$ and, for every nonzero summand $\delta_k$,
\[
b_r
=
-\beta^{-1}\mu^{k-1}
(-1)^{k-r+r(k-r)}
\left(\frac{\beta}{\alpha}\right)^{k-r}
b_{k-r},
\quad r=0,\ldots,k.
\]
\end{enumerate}
\end{proposition}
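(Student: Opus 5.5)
The plan mirrors the proof of Proposition \ref{isotoricequal}. First, by Lemma \ref{centralizador}(ii), every element of $G_0\cup G_1$ commutes with $\sigma$, so conjugation by $\rho\in G_0\cup G_1$ preserves $\der_\sigma(\Bbbk_{-1}[x,y])$. For the inner part we use $\rho[w,\_]_\sigma\rho^{-1}=[\rho(w),\_]_\sigma$. Conjugation by $\rho_\mu$ or $\eta_\mu$ is homogeneous of total degree $0$, since both maps are linear in $x,y$. Hence it preserves the homogeneous decomposition $\delta=\sum_k\delta_k$ used in the proof of Theorem \ref{classification-1}, and it maps inner $\sigma$-derivations to inner ones.

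The next step is to check that the space of non-inner representatives $\delta_k$ is also stable, or at least that the decomposition "inner $\oplus$ chosen complement" is respected. Then Lemma \ref{lemaindep}, applied degree by degree, reduces $\rho\delta\rho^{-1}=\delta$ to two conditions: $[\rho(w)-w,\_]_\sigma=0$, and $\rho\delta_k\rho^{-1}=\delta_k$ for each $k$.

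For the inner condition, I compute the twisted center $\{a: a\sigma(b)=ba \ \forall b\}$ for the flip $\sigma$ by testing $b=x,y$ on monomials, as in Proposition \ref{pho(x)}. The expected outcome is that it is zero, or that it is absorbed by normalizing $w$ as in Proposition \ref{pho(x)}. This gives $\rho(w)=w$.

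For $\rho_\mu\in G_0$, the computation is direct. We have $(\rho_\mu\delta_k\rho_\mu^{-1})(x)=\mu^{-1}\rho_\mu(\delta_k(x))=\mu^{k-1}\delta_k(x)$, and likewise on $y$. So the fixing condition is $\mu^{k-1}=1$ whenever $\delta_k\neq0$, which gives (i).

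For $\eta_\mu\in G_1$, we have $\eta_\mu^{-1}(x)=\alpha\beta^{-1}\mu^{-1}y$ and $\eta_\mu^{-1}(y)=\mu^{-1}x$. Then
\[
(\eta_\mu\delta_k\eta_\mu^{-1})(y)=\mu^{-1}\eta_\mu(\delta_k(x))
=-\mu^{-1}\beta^{-1}\sum_{l}(-1)^l b_l\,\eta_\mu(y)^l\eta_\mu(x)^{k-l}.
\]
Here $\eta_\mu(y)^l\eta_\mu(x)^{k-l}=(\beta\alpha^{-1}\mu)^l\mu^{k-l}x^ly^{k-l}$. Reordering $x^ly^{k-l}=(-1)^{l(k-l)}y^{k-l}x^l$ and setting $r=k-l$, the coefficient of $y^rx^{k-r}$ must match $b_r$. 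This yields an identity of the shape
\[
b_r=-\beta^{-1}\mu^{k-1}(-1)^{k-r+r(k-r)}(\beta/\alpha)^{k-r}b_{k-r}.
\]
The factor is $(\beta/\alpha)^{k-r}$ or $(\beta/\alpha)^{r}$ depending on bookkeeping; these agree after using $\alpha\beta=(-1)^k$. Since $\eta_\mu\delta_k\eta_\mu^{-1}$ is again a $\sigma$-derivation of degree $k$ of the form in Theorem \ref{classification-1}, its $x$-value is determined by its $y$-value. So the $y$-equation suffices, and the $x$-equation is a consistency check.

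The main obstacle is the independence step. The inner $\sigma$-derivations of degree $k$ with $\alpha\beta=(-1)^k$ form a hyperplane of the $\delta_k$-family, and a conjugate of a chosen non-inner $\delta_k$ might differ from $\delta_k$ by an inner piece. My first attempt would be to fix the $G$-stable complement canonically. If that fails, I would instead state the condition modulo inner derivations and absorb the discrepancy into $\rho(w)-w$, checking that the formula for $b_r$ is compatible with the inner hyperplane condition $b_0=-\sum b_l\beta^l$. The sign and power bookkeeping in the $G_1$ computation is the second delicate point.
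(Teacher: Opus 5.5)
You follow the paper's route exactly: the twisted center, the scalar action $\rho_\mu\delta_k\rho_\mu^{-1}=\mu^{k-1}\delta_k$, and the $y$-value computation for $\eta_\mu$. However, the step you call the ``main obstacle'' is left open, and the ``only if'' direction of (ii) depends on it.

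It is not a formality. If $\delta_k$ may be \emph{any} non-inner representative, the equivalence in (ii) fails. Take $\alpha=\beta=1$, $k=2$, and $\delta(x)=\delta(y)=yx$, i.e.\ $b=(0,1,0)$. This is non-inner, since $b_0+b_1\beta+b_2\beta^2\neq0$. The flip $\eta_{-1}(x)=-y$, $\eta_{-1}(y)=-x$ acts on the degree-$2$ family by $(b_0,b_1,b_2)\mapsto(b_2,b_1,b_0)$, so $\eta_{-1}\in\aut_\delta(\Bbbk_{-1}[x,y])$. Yet $\delta=[x,\_]_\sigma+\delta_2'$, where $\delta_2'$ has coefficients $(-1,2,0)$ and is still non-inner, and $\eta_{-1}(x)\neq x$. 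Part (i) is unaffected, because $\rho_\mu$ acts on all degree-$k$ data, including $w_{k-1}$, by the scalar $\mu^{k-1}$. Your fallback of working modulo inner derivations would prove a different statement, not (ii).

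The missing idea is to use the decomposition exactly as it is built in the proof of Theorem \ref{classification-1}. There, $\delta_k$ is the \emph{entire} homogeneous component of $\delta$ of degree $k$, and homogeneous components that happen to be inner are collected into $w$. Consequently $w$ has no component of degree $k-1$ whenever $\delta_k\neq0$. Since $\rho_\mu$ and $\eta_\mu$ are linear in $x,y$, conjugation preserves degree and $\rho$ preserves the degree of $w$. So $\rho\delta\rho^{-1}=\delta$ splits degree by degree, and each degree contains only one of $[w_{k-1},\_]_\sigma$ or $\delta_k$. That is what the paper's ``independence'' amounts to, and no stable complement is needed.

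A $G$-stable complement does exist: $\eta_1^2=\rho_{\beta/\alpha}$ acts by a nonzero scalar on the degree-$k$ family, so $\eta_1$ is diagonalizable there. But with that approach, (ii) holds only for $\delta_k$ chosen inside that complement.

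Two smaller points:
\begin{itemize}
\item The twisted center is genuinely $0$, so you should not hedge with ``normalizing $w$''. From $a\sigma(x)=xa$ one gets $c_{0j}=0$ and the recursion $\alpha(-1)^ic_{ij}=(-1)^{j+1}c_{i-1,j+1}$, which kills every $c_{ij}$.
\item Your claim that $(\beta/\alpha)^{k-r}$ and $(\beta/\alpha)^{r}$ agree once $\alpha\beta=(-1)^k$ is false; for $k=2$, $r=0$ they differ by the factor $\beta^4$. Your displayed formula with $(\beta/\alpha)^{k-r}$ is the correct one.
\end{itemize}
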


\begin{proof}
We first observe that, in the flip case $\sigma$, $\mathcal Z_\sigma(\Bbbk_{-1}[x,y])$
is zero. Indeed, let $a=\sum_{i,j\geq0}c_{ij}y^jx^i$. The condition $a\sigma(x)=xa$ gives
\[
\alpha\sum_{i,j\geq0}(-1)^i c_{ij}y^{j+1}x^i
=
\sum_{i,j\geq0}(-1)^j c_{ij}y^jx^{i+1}.
\]
Therefore, $c_{ij}=0$ for all $i,j$ and then $\mathcal Z_\sigma(\Bbbk_{-1}[x,y])=0$.

Let $\rho_\mu(x)=\mu x$, $\rho_\mu(y)=\mu y$ be an element of $G_0$. Since $\rho_\mu$ commutes with $\sigma$, conjugation by $\rho_\mu$ preserves $\der_\sigma(\Bbbk_{-1}[x,y])$. Moreover,
$\rho_\mu[w,\_]_\sigma\rho_\mu^{-1}=[\rho_\mu(w),\_]_\sigma$. Thus, the inner component is fixed if and only if $[\rho_\mu(w)-w,\_]_\sigma=0$. Since $\mathcal Z_\sigma(\Bbbk_{-1}[x,y])=0$, this is equivalent to
$\rho_\mu(w)=w$.

Furthermore, for each homogeneous component $\delta_k$, both $\delta_k(x)$ and $\delta_k(y)$ are homogeneous of total degree $k$. Therefore,
\[
(\rho_\mu\delta_k\rho_\mu^{-1})(x)
=
\rho_\mu\delta_k(\mu^{-1}x)
=
\mu^{-1}\rho_\mu(\delta_k(x))
=
\mu^{k-1}\delta_k(x),
\]
and similarly
\[
(\rho_\mu\delta_k\rho_\mu^{-1})(y)
=
\mu^{k-1}\delta_k(y).
\]
Then, $\rho_\mu\delta_k\rho_\mu^{-1}=\mu^{k-1}\delta_k$.
Since the decomposition was chosen with non-inner summands $\delta_k$, the inner part and the non-inner components are independent. Thus, $\rho_\mu\delta\rho_\mu^{-1}=\delta$ if and only if $\rho_\mu(w)=w$ and $\mu^{k-1}=1$ for every nonzero summand $\delta_k$. This proves item $(i)$.    

Now let $\eta_\mu(x)=\mu y$, $\eta_\mu(y)=\frac{\beta}{\alpha}\mu x$
be an element of $G_1$. Since $\eta_\mu$ commutes with $\sigma$ and $\mathcal Z_\sigma(\Bbbk_{-1}[x,y])=0$, we again obtain that the inner component is fixed if and only if $\eta_\mu(w)=w$.

It remains to compute the conjugate of each $\delta_k$. Notice that,
\[
(\eta_\mu\delta_k\eta_\mu^{-1})(y)
=
\eta_\mu\delta_k(\mu^{-1}x)
=
\mu^{-1}\eta_\mu(\delta_k(x)).
\]
Using the expression $\delta_k(x)=
-\beta^{-1}\sum_{l=0}^{k}(-1)^l b_l y^l x^{k-l}$, we obtain
\[
(\eta_\mu\delta_k\eta_\mu^{-1})(y)
=
-\beta^{-1}\mu^{-1}
\sum_{l=0}^{k}(-1)^l b_l
\eta_\mu(y)^l\eta_\mu(x)^{k-l}.
\]
Since
\[
\eta_\mu(y)^l\eta_\mu(x)^{k-l}
=
\left(\frac{\beta}{\alpha}\right)^l
\mu^k x^l y^{k-l}.
\]
Using the relation $xy=-yx$, we have $x^l y^{k-l}
=(-1)^{l(k-l)}y^{k-l}x^l$. Therefore,
\[
(\eta_\mu\delta_k\eta_\mu^{-1})(y)
=
-\beta^{-1}\mu^{k-1}
\sum_{l=0}^{k}
(-1)^{l+l(k-l)}
\left(\frac{\beta}{\alpha}\right)^l
b_l
y^{k-l}x^l.
\]
Putting $r=k-l$, this becomes
\[
(\eta_\mu\delta_k\eta_\mu^{-1})(y)
=
\sum_{r=0}^{k}\widetilde b_r y^r x^{k-r},
\]
where $\widetilde b_r
=
-\beta^{-1}\mu^{k-1}
(-1)^{k-r+r(k-r)}
\left(\frac{\beta}{\alpha}\right)^{k-r}
b_{k-r}$.

Since $\eta_\mu$ commutes with $\sigma$, the conjugate
$\eta_\mu\delta_k\eta_\mu^{-1}$ is again a $\sigma$-derivation. Moreover, it
is homogeneous of degree $k$. Hence it is again a summand of the same type,
with coefficients $\widetilde b_r$ as above. Therefore, $\eta_\mu\delta_k\eta_\mu^{-1}=\delta_k$ if and only if
\[
b_r
=
-\beta^{-1}\mu^{k-1}
(-1)^{k-r+r(k-r)}
\left(\frac{\beta}{\alpha}\right)^{k-r}
b_{k-r}
\]
for every $r=0,\ldots,k$.

Since the decomposition is direct, after removing the inner summands, the condition $\eta_\mu\delta\eta_\mu^{-1}=\delta$ is equivalent to fixing the inner component and each non-inner summand
$\delta_k$. Then, $\eta_\mu\in\aut_\delta(\Bbbk_{-1}[x,y])$ if and only if the conditions in item $(ii)$ hold. This completes the proof.
\end{proof}

\begin{remark}
The condition on the inner component in $\der_{\sigma}(\Bbbk_q[x,y])$ is always ruled by the $\sigma$-twisted center. Indeed, $\rho\in \aut_\sigma(\Bbbk_q[x,y])$ if and only if $\rho(w)-w\in\mathcal Z_\sigma(\Bbbk_q[x,y])$.

In the case $q\neq\pm1$, this is precisely the condition appearing in Proposition \ref{pho(x)}. In the singular case $q=-1$, the same principle applies. For toric automorphisms $\sigma(x)=\gamma x$, $\sigma(y)=\gamma y$, the possible $\sigma$-twisted centers are $0$,
$\Bbbk[x^2,y^2]$ or $\Bbbk[x^2,y^2]yx$. In this case, the condition is written as $\rho(w)-w\in\mathcal Z_\sigma(\Bbbk_{-1}[x,y])$. On the other hand, for flip automorphisms $\sigma(x)=\alpha y$, $\sigma(y)=\beta x$, we have $\mathcal Z_\sigma(\Bbbk_{-1}[x,y])=0$. Therefore, in the flip case, fixing the inner component is equivalent to $\rho(w)=w$.

A similar phenomenon also appears for ordinary inner derivations on the differential Ore extension $A_h=\Bbbk[x][t;d]$, with $d=h(x)\partial_x$. Since the center of $A_h$ is $\Bbbk$, the isotropy of an inner derivation $\ad_w$ is described by the condition $\rho(w)-w\in\Bbbk$. However, in several important situations this is equivalent to requiring that $\rho(w)=w$; see, for instance, \cite[Proposition 25, 27 and 28]{BaltazarSilvaMartini2026}.
\end{remark}

\section*{Acknowledgements}

The authors are grateful to Prof. Tomasz Brzeziński for suggesting the initial references that inspired this work and for his valuable encouragement. The authors also thank Prof. David A. Jordan for sharing his recent work with us
and for the valuable discussion on the subject after the first version of this paper.

\bibliographystyle{abbrv}

\bibliography{ref}

@article{AlevChamarie,
  author  = {Alev, J. and Chamarie, M.},
  title   = {D{\'e}rivations et automorphismes de quelques alg{\`e}bres quantiques},
  journal = {Communications in Algebra},
  volume  = {20},
  number  = {6},
  pages   = {1787--1802},
  year    = {1992}
}

@book{Borel,
  author    = {Borel, A.},
  title     = {Linear Algebraic Groups},
  publisher = {Springer},
  year      = {1991}
}

@article{BaltazarSilvaMartini2026,
  author  = {Rene Baltazar and Leonardo Duarte Silva and Grasiela Martini},
  title   = {On the isotropy of differential {Ore} extensions},
  journal = {preprint: arXiv:2604.17161},
  year    = {2026},
  eprint  = {2604.17161},
  archivePrefix = {arXiv},
  primaryClass  = {math.RA}
}

@article{Jordan2025,
  author  = {Jordan, David A.},
  title   = {Skew derivations of quantum tori and quantum affine spaces},
  journal = {Journal of Algebra},
  volume  = {672},
  pages   = {261--302},
  year    = {2025},
  month   = jun,
  doi     = {10.1016/j.jalgebra.2025.02.032},
  url     = {https://doi.org/10.1016/j.jalgebra.2025.02.032}
}

@article{Brz2018,
  author  = {Almulhem, M. and Brzezi{\'n}ski, T.},
  title   = {Skew derivations on generalized {Weyl} algebras},
  journal = {Journal of Algebra},
  volume  = {493},
  pages   = {194-235},
  year    = {2018},
  doi     = {10.1016/j.jalgebra.2017.09.018}
}

@article{SBVA,
  author  = {Santana, A. and Baltazar, R. and Vinciguerra, R. and Araujo, W.},
  title   = {On isotropy groups of quantum plane},
  journal = {Journal of Pure and Applied Algebra},
  volume  = {229},
  number  = {11},
  pages   = {},
  year    = {2025}
}

@article{PanBalta,
  author  = {Baltazar, R. and Pan, I.},
  title   = {On the automorphism group of a polynomial differential ring in two variables},
  journal = {Journal of Algebra},
  volume  = {576},
  pages   = {197--227},
  year    = {2021},
  doi     = {10.1016/j.jalgebra.2021.02.009}
}

@article{YanShamsuddin,
  author  = {Yan, D.},
  title   = {On {Shamsuddin} derivations and the isotropy groups},
  journal = {Journal of Algebra},
  volume  = {637},
  pages   = {243--252},
  year    = {2024},
  doi     = {10.1016/j.jalgebra.2023.09.023}
}

@article{PanMendes,
  author  = {Mendes, L. G. and Pan, I.},
  title   = {On plane polynomial automorphisms commuting with simple derivations},
  journal = {Journal of Pure and Applied Algebra},
  volume  = {221},
  number  = {4},
  pages   = {875--882},
  year    = {2017},
  doi     = {10.1016/j.jpaa.2016.08.009}
}

\bigskip

\noindent
\textsc{Rene Baltazar}\\
Universidade Federal do Rio Grande - FURG\\
Santo Antônio da Patrulha/RS, Brasil\\
\texttt{renebaltazar.furg@gmail.com}

\bigskip

\noindent
\textsc{Rafael Cavalheiro}\\
Universidade Federal do Rio Grande - FURG\\
Santo Antônio da Patrulha/RS, Brasil\\
\texttt{rcavalheiro@furg.br}

\end{document}